\theoremstyle{remark}
\newtheorem{rmk}{Remark}[section]
\theoremstyle{plain}
\newtheorem{lem}{Lemma}[section]
\newtheorem{thm}{Theorem}[section]
\newtheorem{prop}{Proposition}[section]
\theoremstyle{definition}
\newtheorem{defin}{Definition}[section]
\newtheorem*{ack}{Acknowledgements}
\newcommand{\F}{\mathcal F}
\newcommand{\Oo}{\mathcal O}
\newcommand{\N}{\mathbb N}
\newcommand{\R}{\mathbb R}
\newcommand{\C}{\mathbb C}
\newcommand{\E}{\mathcal E}
\newcommand{\X}{ X^*}
\newcommand{\Ker}{\mathrm{Ker}}
\def\ds{\displaystyle}
\newcommand{\one}{1\mkern -4mu\mathrm{l}}
\DeclarePairedDelimiter{\prodscal}{\langle}{\rangle}
\title{On the Ornstein-Uhlenbeck operator in convex sets of Banach spaces}
\author{G. Cappa}
\begin{document}
\maketitle

\begin{abstract}
We study the Ornstein-Uhlenbeck operator and the Ornstein-Uhlenbeck semigroup in an open convex subset of an infinite dimensional separable Banach space $X$. This is done by finite dimensional approximation. In particular we prove Logarithmic-Sobolev and Poincaré inequalities, and thanks to these inequalities we deduce spectral properties of the Ornstein-Uhlenbeck operator.
\end{abstract}

\textbf{2010 Mathematics Subject Classification AMS} 35R15, 39B62, 47D07

\textbf{Keywords:} Ornstein-Uhlenbeck operator, infinite dimension.

\section*{Introduction}
In this paper we describe the properties of the Ornstein-Uhlenbeck operator $L^\Omega$, and of the semigroup generated by it $(T^\Omega(t))_{t\geq0}$, in $L^2(\Omega,\gamma)$. Here $\Omega$ is an open convex subset of an infinite dimensional separable Banach space $X$ endowed with a centered non-degenerate Gaussian measure. We define the Ornstein-Uhlenbeck operator as the self-adjoint associated to the quadratic form (see e.g. \cite{RocknerMa}, \cite{Wang})
\[\int_\Omega\langle \nabla_Hu,\nabla_Hv\rangle_H \gamma(dx),\quad u,v\in W^{1,2}(\Omega,\gamma)\]
where $W^{1,2}(\Omega,\gamma)$ is the Sobolev space defined in Section \ref{section:construc_OU}, and $\nabla_H$ is the gradient along the Cameron-Martin space $H$.

We approximate  $L^\Omega$ by finite-dimensional Ornstein-Uhlenbeck operators, by using the cylindrical approximation of $\Omega$ made in \cite{LunMirPal}. For finite dimensional Ornstein-Uhlenbeck operators we use the results of \cite{BerFor} and some properties that we prove here. In particular we show that
\[[T^{\Omega}(t)(fg)]^2\leq T^{\Omega}(t)(f^2)T^{\Omega}(t)(g^2),\text{ a.e. in }\Omega,\ \forall f,g\in L^2(\Omega,\gamma),\ \forall t\geq0,\]
and
\[|\nabla_HT^{\Omega}(t)(f)|_H\leq e^{-t}T^{\Omega}(t)|\nabla_Hf|_H,\text{ a.e. in }\Omega,\ \forall f\in W^{1,2}(\Omega,\gamma),\ \forall t\geq0.\]
%The latter inequality is the starting point for the proof of several properties, i.e. in finite dimension it is used in \cite{Wang_art} and in \cite{LorenziLunardi}

Moreover we prove that $(T^\Omega(t))_{t\geq0}$ is a submarkovian semigroup, namely $0\leq f(x)\leq 1$, $\gamma$-a.e. implies $0\leq (T^\Omega(t)f)(x)\leq 1$, $\gamma$-a.e. for every $t>0$.
These properties are used to show the \emph{Poincaré inequality}
 \[\int_\Omega\left|f-\int_\Omega f\,d\gamma\right|^2d\gamma\leq\int_\Omega|\nabla_Hf|_H^2d\gamma,\]
and the \emph{Logarithmic-Sobolev inequality}
 \[\int_\Omega f^2\log(f^2)d\gamma\leq\int_{\Omega}|\nabla_H f|_H^2d\gamma+\|f\|_{L^2(\Omega,\gamma)}^2\log(\|f\|_{L^2(\Omega,\gamma)}^2),\]
that hold for every $f\in W^{1,2}(\Omega,\gamma)$.

Such inequalities can also be deduced from the theorems shown in \cite[Section 6]{FeyelUstunel} where the proofs make heavy use of Malliavin calculus and Stochastic Analysis. Our proof is much simpler and relies on analytic tools and on the Deuschel-Strook's method.
Infinite dimensional Poincaré and Logarithmic-Sobolev inequalities are proved in \cite{bogachev} for $\Omega=X$ through the Wiener chaos decomposition. In our case we don't have an explicit representation formula for the semigroup neither any sort of Wiener chaos decomposition or explicit expression of the eigenfunctions.

As expected, thanks to the Poincaré inequality we prove spectral properties of $L^\Omega$. %An explicit characterization of $D(L^\Omega)$ will be the object of subsequent paper.

\section{Construction of Ornstein-Uhlenbeck operator and some properties of semigroups}\label{section:construc_OU}
Let $\X_\gamma$ be the closure of $\X$ in $L^2(X,\gamma)$ and let $R_\gamma:\X_\gamma\rightarrow(\X)'$ be the covariance operator of $\gamma$ defined by
\[R_\gamma f (g):=\int_X f(x)g(x)\,\gamma(dx),\quad f\in\X_\gamma,\ g\in\X.\]
The range of $R_\gamma$ is contained in $X$, that is for every $f\in\X_\gamma$ there exists a unique $y\in X$ such that $R_\gamma f(g)=g(y)$ for all $g\in\X$ (see \cite[p.~44]{bogachev}).
The Cameron-Martin space $H$ is given by $R_\gamma(\X_\gamma)$, i.e., it consists of all $h\in X$ such that there exists $\hat h\in \X_\gamma$ for which
\[h=R_\gamma\hat h.\]
The Cameron-Martin space $H$ is a separable Hilbert space equipped with the inner product
\[\prodscal{h,k}_H=\int_X R_\gamma\hat h(x) R_\gamma\hat k(x)\,\gamma(dx),\]
and the norm $|h|^2_H:=\prodscal{h,h}_H$ for $h,k\in H$, see \cite[p.~60]{bogachev}. Let $\{h_n\}_{n\in\N}$ be an orthonormal basis of $H$. For $f:X\rightarrow\R$ we denote by $\partial_n f$ the directional derivative in the direction of $h_n$
\[\partial_n f(x)=\lim_{t\rightarrow0}\frac{f(x+t h_n)-f(x)}{t},\]
whenever such limit exists.

We recall the integration by parts formula
\begin{equation}
\label{parti}
\int_X \partial_j f\,\varphi\, d\gamma = - \int_X f( \partial_j \varphi - \hat{h}_j)d\gamma,
\end{equation}
that holds for every $f$, $\varphi\in C^1_b(X)$ (e.g.,  \cite[Thm.~5.1.8]{bogachev}).

In finite dimension we denote by $D_i$ the directional derivative in the direction of the $i-$th vector of the canonical basis of $\R^n$.

\begin{defin}
 $\F C_b^k(X)$ is the space of cylindrical functions of the form
 \[f(x)=\varphi(l_1(x),\ldots,l_n(x)),\]
 with $\varphi\in C^k_b(\R^n)$, $l_i$, $\ldots$, $l_n\in X^*$ and $n\in\N$.
\end{defin}

If $f\in \F C^1_b(X)$ then the Taylor expansion to the first order of $f$ at $x_0\in X$ is  \[f(x)=f(x_0)+f'(x_0)(x-x_0)+o(\|x-x_0\|_X)\]
where $f'(x_0)\in\X$. Since $H$ is continuously embedded in $X$, the map $h\mapsto f'(x_0)h$, from $H$ to $\R$, belongs to $H^*$. Then, there exists a unique $y\in H$ such that
\[\prodscal{y,h}_H=f'(x_0)h\quad \forall h\in H.\]
We put $\nabla_H f(x_0):=y$.

Let $\Omega$ be an open   set of $X$. We denote by $\F C^k_b(X)_{|\Omega}$ the set of the restrictions to $\Omega$ of the elements of $\F C^k_b(X)$.

\begin{prop}
\label{closability}
 The operator $\nabla_H:\F C^1_b(X)_{|\Omega}\rightarrow L^2(\Omega,\gamma;H)$ is closable in $L^2(\Omega,\gamma)$. The closure is still denoted by $\nabla_H$.
\end{prop}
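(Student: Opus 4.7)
The approach is a standard duality argument based on Gaussian integration by parts, with a distance-function cutoff to handle the boundary of $\Omega$. Suppose $(f_n)\subset \F C^1_b(X)_{|\Omega}$ satisfies $f_n\to 0$ in $L^2(\Omega,\gamma)$ and $\nabla_H f_n\to G$ in $L^2(\Omega,\gamma;H)$; I aim to prove $G=0$ $\gamma$-a.e.\ on $\Omega$. Since $\{h_n\}$ is an orthonormal basis of $H$, it suffices to show $\langle G,h_j\rangle_H=0$ a.e.\ for every $j$. Fix $j$ and pick $\hat h_j\in\X_\gamma$ with $h_j=R_\gamma\hat h_j$.

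For $\varepsilon>0$ set $\chi_\varepsilon(x):=\phi(d_\Omega(x)/\varepsilon)$, where $d_\Omega(x):=\mathrm{dist}_X(x,X\setminus\Omega)$ and $\phi\in C^\infty(\R;[0,1])$ is nondecreasing with $\phi\equiv 0$ on $(-\infty,1]$ and $\phi\equiv 1$ on $[2,+\infty)$. Since $\Omega$ is open, $d_\Omega$ is $1$-Lipschitz on $X$; by the continuous embedding $H\hookrightarrow X$ it is then Lipschitz in the $H$-direction, and so $\chi_\varepsilon$ is bounded, $H$-Lipschitz with constant $O(1/\varepsilon)$, and supported in $\{d_\Omega\geq\varepsilon\}\subset\Omega$. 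For any $\varphi\in\F C^1_b(X)$, the product $\chi_\varepsilon\varphi$ is bounded and $H$-Lipschitz, so the Gaussian integration by parts formula on $X$ (extended from $\F C^1_b(X)$ to $H$-Lipschitz test functions by approximation) applies to $u=f_n$ and $v=\chi_\varepsilon\varphi$. Since $v$ vanishes outside $\Omega$, both sides reduce to integrals on $\Omega$:
\[\int_\Omega \langle\nabla_H f_n,h_j\rangle_H\,\chi_\varepsilon\varphi\,d\gamma=\int_\Omega f_n\bigl(\hat h_j\,\chi_\varepsilon\varphi-\partial_{h_j}(\chi_\varepsilon\varphi)\bigr)d\gamma.\]
The bracket on the right lies in $L^2(\Omega,\gamma)$ (using $\hat h_j\in L^2(X,\gamma)$, $\chi_\varepsilon\varphi\in L^\infty$, and $\partial_{h_j}(\chi_\varepsilon\varphi)\in L^\infty$), so by Cauchy--Schwarz the right-hand side tends to $0$ as $n\to\infty$, while the left-hand side tends to $\int_\Omega\langle G,h_j\rangle_H\chi_\varepsilon\varphi\,d\gamma$; hence the latter vanishes for every $\varepsilon>0$. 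Letting $\varepsilon\to 0$ and applying dominated convergence (dominating function $|\langle G,h_j\rangle_H\varphi|\in L^1$) yields $\int_\Omega\langle G,h_j\rangle_H\varphi\,d\gamma=0$ for every $\varphi\in\F C^1_b(X)_{|\Omega}$, and by density of this class in $L^2(\Omega,\gamma)$ we conclude $\langle G,h_j\rangle_H=0$ a.e.

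The main obstacle is the boundary of $\Omega$: cylindrical functions on $X$ do not vanish on $\partial\Omega$, and because cylinder sets are unbounded in directions transverse to their defining functionals, no natural class of cylindrical test functions has support strictly inside $\Omega$. The non-cylindrical cutoff $\chi_\varepsilon$ built from $d_\Omega$ bypasses this, at the cost of having to extend Gaussian integration by parts from $\F C^1_b(X)$ to $H$-Lipschitz test functions; this extension is routine and relies only on the fact that $H$-Lipschitz functions can be approximated in $H$-Sobolev norm by smooth cylindrical ones.
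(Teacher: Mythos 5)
The paper does not actually prove this proposition; it defers entirely to Celada--Lunardi, so there is no internal proof to compare against. Your argument is the standard one for this kind of statement and, in outline, it is correct: a duality/integration-by-parts argument in which the boundary of $\Omega$ is neutralised by a cutoff supported well inside $\Omega$, followed by $n\to\infty$, $\varepsilon\to 0$, and density of cylindrical restrictions in $L^2(\Omega,\gamma)$. The limit computations, the reduction of all integrals to $\Omega$ (so that only the restrictions of the $f_n$ matter), and the final localisation are all fine. This is also essentially the strategy of the cited reference, where $\Omega$ is a sublevel set $\{G<0\}$ and the cutoff is taken of the form $\theta_\varepsilon(G)\varphi$; your variant with $\chi_\varepsilon=\phi(d_\Omega/\varepsilon)$ has the advantage of working for an arbitrary open set (convexity is never used), at the price of a non-cylindrical, merely Lipschitz cutoff.

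The one place where you are too casual is the final sentence: the extension of the Gaussian integration-by-parts formula to the test function $v=\chi_\varepsilon\varphi$ is not ``routine approximation'' --- it rests on the infinite-dimensional Rademacher-type theorem for Gaussian measures (see Bogachev, \emph{Gaussian Measures}, Theorem 5.11.2, going back to Enchev and Stroock), which asserts that a bounded function Lipschitz along $H$ belongs to $W^{1,2}(X,\gamma)$ with $|\nabla_H\chi_\varepsilon|_H\leq c_H\|\phi'\|_\infty/\varepsilon$ a.e.; only once this is known does the identity
\[\int_X \partial_{h_j}u\,v\,d\gamma=\int_X u\bigl(\widehat{h}_j\,v-\partial_{h_j}v\bigr)\,d\gamma\]
for $u\in\F C^1_b(X)$ and bounded $v\in W^{1,2}(X,\gamma)$ follow by taking limits of cylindrical approximants of $v$. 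That theorem is itself proved by nontrivial finite-dimensional approximation, so it should be cited rather than dismissed; note also that membership of $\chi_\varepsilon$ in $W^{1,2}(X,\gamma)$ is a statement about the \emph{whole-space} Sobolev class, whose closability is the easy boundary-free case, so there is no circularity. With that citation supplied, your proof is complete.
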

\begin{proof}
Let $f_n\in \F C^1_b(X)$ be such that $f_{n|\Omega} \to 0$ in $L^2(\Omega,\gamma)$, and $\nabla_H f_{n|\Omega} \to G$ in $L^2(\Omega,\gamma;H)$, as $n\to \infty$. To show that $G=0$, it is sufficient to prove that for every $j\in\N$ we have
\begin{equation}
\label{zero}
\int_{\Omega} \langle G, h_j\rangle_H\varphi\, d\gamma =0,
\end{equation}
for every $\varphi\in  C^1_b(X)$ with support contained in $\Omega$. For such functions $\varphi$ and for every $n\in \N$ we have
$$\int_{\Omega} \langle \nabla f_n, h_j\rangle_H \varphi\, d\gamma = \ds \int_{X} \partial_jf_n\,  \varphi\, d\gamma$$
so that, using  \eqref{parti},
$$\int_{\Omega} \langle \nabla f_n, h_j\rangle_H \varphi\, d\gamma = -  \int_{X}f_n(  \partial_j\varphi - \hat{h}_j \varphi) d\gamma  = \ds  -\int_{\Omega} f_n(  \partial_j\varphi - \hat{h}_j \varphi) d\gamma $$
and \eqref{zero} follows letting $n\to \infty$ in both members.
\end{proof}

\begin{defin}
\label{def}
  We denote by $W^{1,2}(\Omega,\gamma)$ the domain of $\nabla_H$.
\end{defin}

The space $W^{1,2}(\Omega,\gamma)$ is a Hilbert space with the inner product
\begin{align*}
&\prodscal{f,g}_{W^{1,2}(\Omega,\gamma)}:=\int_\Omega f\,g\,d\gamma+\int_\Omega\prodscal{\nabla_H f,\nabla_H g}_H\,d\gamma,
\end{align*}
and the norm (equivalent to the graph norm)
\begin{equation}
 \label{norma}
 \|f\|^2_{W^{1,2}(\Omega,\gamma)}=\int_{\Omega}f^2d\gamma+\int_{\Omega}|\nabla_H f|^2_H d\gamma.
\end{equation}

The quadratic form
\begin{equation}
  \E(u,v)=\int_\Omega\langle \nabla_Hu,\nabla_Hv\rangle_H \gamma(dx) \quad u,v\in W^{1,2}(\Omega,\gamma)
  \label{forma_quadratica}
\end{equation}
is therefore a symmetric, closed, and coercive form, according to the notation of \cite{RocknerMa}. It is
used to define the Ornstein-Uhlenbeck operator $L^{\Omega}:D(L^{\Omega})\subset L^2(\Omega,\gamma)\rightarrow L^2(\Omega,\gamma)$ by setting
\begin{equation}
\begin{split}
D(L^{\Omega}):=\left\{\right.u\in W^{1,2}(\Omega,\gamma)&:\exists f\in L^2(\Omega,\gamma)\text{ s.t. }\\
&\left.\E(u,v)=-\int_\Omega fvd\gamma, \forall v\in W^{1,2}(\Omega,\gamma)\right\}
\end{split}
  \label{domain_L}
\end{equation}
and $L^{\Omega}u=f$. The operator $L^{\Omega}:D(L^{\Omega})\mapsto X$ is self-adjoint in $L^2(\Omega,\gamma)$ and dissipative (that is $\langle L^{\Omega}u,u\rangle_{L^2(\Omega,\gamma)}\leq0$ for every $u\in D(L^{\Omega})$), hence it is the infinitesimal generator of an analytic contraction semigroup $(T^{\Omega}(t))_{t\geq0}$ in $L^2(\Omega,\gamma)$. So $T^{\Omega}(t)f\in D(L^{\Omega})$ for every $t>0$ and
\begin{equation}
  \frac{\partial}{\partial t}T^{\Omega}(t)(f)=L^{\Omega} T^{\Omega}(t)(f)\quad\forall f\in L^2(\Omega,\gamma).
  \label{derivata_di_T(t)}
\end{equation}
See \cite[Sect. I.2]{RocknerMa}.

For every function $v:X\mapsto \R$ we set as usual $v^+(x) := \max\{v(x), 0\}$.

\begin{lem}
\label{lem:positivity_T(t)}
  The semigroup $T^\Omega(t)$ is sub-markovian, that is for all $\varphi\in L^2(\Omega,\gamma)$ such that  $\varphi\geq 0$ a.e. in $\Omega$ and   for all $t\geq0$ we  have $0\leq (T^{\Omega}(t)\varphi)(x) \leq 1$ a.e. in $\Omega$.
\end{lem}

\begin{proof} By \cite[Prop. I.4.3]{RocknerMa} it is sufficient to show that for every $u\in D(L^\Omega)$ we have
$$\int_{\Omega} L^\Omega u(u-1)^+ d\gamma \leq 0. $$
To this aim we show preliminarly  that for every $v\in W^{1,2}(\Omega, \gamma)$ the function $v^+$ belongs to $W^{1,2}(\Omega, \gamma)$.
   We set
  \[g_n(\xi)=\frac{1}{2}\bigg( \xi + \sqrt{\xi^2+\frac{1}{n}}\bigg),\quad \xi\in\R.\]
  Let $\{v_n\}\subset\F C^1_b(X)$ be a sequence that approaches $v$ in $W^{1,2}(\Omega,\gamma)$ and pointwise a.e. in $\Omega$. Then the function
  $g_n\circ v_n$ belongs to $\F C^1_b(X)$ and approaches $(v+|v|)/2 = v^+$ in $W^{1,2}(\Omega,\gamma)$.
  Indeed, $g_n\circ v_n \to v^+$ in $L^2(\Omega,\gamma)$ by the Do\-minated Convergence Theorem, and $\nabla_H(g_n\circ v_n)=(g'_n\circ v_n)\nabla_H v_n$ converges to $  \nabla_H v \one_{\{v>0\}} +   \nabla_H v \one_{\{v=0\}}/2$ in $L^2(\Omega,\gamma;H)$, still by the Dominated Convergence Theorem. So, $v^+\in W^{1,2}(\Omega,\gamma)$ and $  \nabla_Hv^+ =  \nabla_H v \one_{\{v>0\}} +   \nabla_H v \one_{\{v=0\}}/2$.

Let now $u\in D(L^\Omega)$. Then $u-1 \in W^{1,2}(\Omega, \gamma)$, so that $(u-1)^+  \in W^{1,2}(\Omega, \gamma)$,  and by definition
$$\int_{\Omega} L^\Omega u(u-1)^+ d\gamma = - \int_{\Omega}\langle \nabla_Hu, \nabla_H(u-1)^+ \rangle_H d\gamma . $$
Therefore,
$$\begin{array}{lll}
\ds \int_{\Omega} L^\Omega u(u-1)^+ d\gamma & =& \ds   -  \int_{\{u > 1\}}\langle  \nabla_Hu, \nabla_H(u-1)\rangle_H d\gamma
\\
\\
& &\ds - \frac{1}{2}  \int_{\{u= 1\}}\langle \nabla_Hu, \nabla_H(u-1)\rangle_H  d\gamma
\\
\\
 & =&  \ds   -  \int_{\{u\geq 1\}} |\nabla_Hu|^2_H d\gamma  - \frac{1}{2}  \int_{\{u= 1\}}  |\nabla_Hu|^2_H d\gamma
 \\
 \\
 &  \leq & 0.
 \end{array}$$
\end{proof}

\section{Properties in finite dimension}

Let $\gamma_d$ be the standard Gaussian measure on $\R^d$. Then, $H=\R^d$ and $|x|_H = |x|_{\R^d}$ for every $x\in \R^d$. As a canonical basis of $H$ we take the canonical basis of $\R^d$. So, $\nabla_Hf = \nabla f$ for every $f\in C^1_b(\R^d)$.

Throughout this section $\Oo$ is an open convex set in $\R^d$ with $C^{2+\alpha}$ boundary, for some $\alpha >0$.

According to Definition \ref{def},   the space $W^{1,2}(\Oo,\gamma_d)$ is the domain of the closure of the operator $\nabla: C^1_b(\R^d)_{|\Oo}\mapsto L^2( \Oo, \gamma_d; \R^d)$. Namely, it is the
set of all $f\in L^2( \Oo, \gamma_d)$ such that there exists a sequence $(f_n)\subset C^1_b(\R^d)$ such that ${f_n}_{|\Oo}\to f$ in $L^2( \Oo, \gamma_d)$, and $(D_if_n)$ converges in $L^2( \Oo, \gamma_d)$ for every $i=1, \ldots, d$. The norm \eqref{norma} is now
\[\|f\|_{W^{1,2}(\Oo,\gamma_d)}^2:=\int_\Oo |f|^2 d\gamma_d+\sum_{i=1}^d \int_\Oo |D_if|^2 d\gamma_d.  \]
Similarly, $W^{2,2}(\Oo,\gamma_d)$ is the domain of the closure of the operator $f\mapsto (\nabla f, D^2f): C^2_b(\R^d)_{|\Oo}\to L^2( \Oo, \gamma_d; \R^d)\times L^2( \Oo, \gamma_d; \R^{d^2})$ in $L^2(\Oo,\gamma_d)$, where $D^2f$ denotes the Hessian matrix of $f$ (the proof of the closability of this operator is the same as in Proposition \ref{closability}). Its norm is
\[\|f\|_{W^{2,2}(\Oo,\gamma_d)}^2:=\int_\Oo |f|^2 d\gamma_d+\sum_{i=1}^d \int_\Oo |D_if|^2 d\gamma_d+\sum_{i,j=1}^d \int_\Oo |D_i D_j f|^2 d\gamma_d.\]

\begin{lem}
\label{Le:equiv}
 The Sobolev space $W^{1,2}(\Oo,\gamma_d)$ coincides with the set of the functions $f\in W^{1,2}_{loc}(\Oo, dx)$  such that $f\in L^2(\Oo,\gamma_d)$ and whose  weak first order  derivatives are in $L^2(\Oo,\gamma_d)$. Similarly, the Sobolev space $W^{2,2}(\Oo,\gamma_d)$ coincides with the set of the functions $f\in W^{2,2}_{loc}(\Oo, dx)$  such that $f\in L^2(\Oo,\gamma_d)$ and whose  weak derivatives of order up to $2$ are in $L^2(\Oo,\gamma_d)$.
\end{lem}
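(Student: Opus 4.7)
I would prove each of the two stated equalities separately, noting that the $W^{1,2}$ and $W^{2,2}$ cases are structurally identical; I describe the argument for $W^{1,2}$ and then indicate the modification needed for $W^{2,2}$. Let $V$ denote the right-hand side of the first claim, that is the space of $f\in W^{1,2}_{loc}(\mathcal{O},dx)$ with $f$ and its first-order weak derivatives in $L^2(\mathcal{O},\gamma_d)$.

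The inclusion $W^{1,2}(\mathcal{O},\gamma_d)\subseteq V$ is easy and relies on the local equivalence between $\gamma_d$ and Lebesgue measure. If $\{f_n\}\subset C_b^\infty(\mathcal{O})$ is Cauchy in the Gaussian Sobolev norm with limit $f$, then on every compact $K\subset\mathcal{O}$ the Gaussian density admits a strictly positive lower bound $c_K$, so $\{f_n\}$ is Cauchy in $W^{1,2}(K,dx)$. Thus $f\in W^{1,2}_{loc}(\mathcal{O},dx)$ and its weak derivatives coincide with the $L^2(\mathcal{O},\gamma_d)$-limits of $D_i f_n$, which therefore belong to $L^2(\mathcal{O},\gamma_d)$.

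For the reverse inclusion $V\subseteq W^{1,2}(\mathcal{O},\gamma_d)$, given $f\in V$ I would construct a sequence in $C_b^\infty(\mathcal{O})$ approaching $f$ in the Gaussian Sobolev norm by three steps. First, a smooth radial cut-off: pick $\chi_R\in C_c^\infty(\mathbb{R}^d)$ with $\chi_R\equiv 1$ on $B_R$, $\mathrm{supp}\,\chi_R\subset B_{R+1}$, and $\|\nabla\chi_R\|_\infty$ uniformly bounded in $R$. Since $|\nabla(f\chi_R)|\leq|\nabla f|+|f|\|\nabla\chi_R\|_\infty$ and both $f,\nabla f\in L^2(\mathcal{O},\gamma_d)$, dominated convergence yields $f\chi_R\to f$ in the Gaussian Sobolev norm, so I may assume that $f$ has bounded support in $\bar{\mathcal{O}}$. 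Second, a convexity-based mollification up to the boundary: fix $x_0\in\mathcal{O}$, define $f_\lambda(y)=f(x_0+\lambda^{-1}(y-x_0))$ for $\lambda\in(0,1)$; by convexity of $\mathcal{O}$, the support of $f_\lambda$ has strictly positive distance from $\partial\mathcal{O}$, so the convolution $f_\lambda*\rho_\varepsilon$ with a standard mollifier of sufficiently small radius $\varepsilon$ is well defined and belongs to $C^\infty(\bar{\mathcal{O}})$. Sending first $\varepsilon\to 0^+$ and then $\lambda\to 1^-$ produces convergence to $f$ in $W^{1,2}(\mathcal{O},dx)$ by the classical Meyers–Serrin scheme for domains with the segment property; because the whole construction is supported in a fixed bounded subset of $\bar{\mathcal{O}}$, the local equivalence of $\gamma_d$ and Lebesgue measure upgrades the convergence to the Gaussian Sobolev norm. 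Third, the approximating functions already have compact support and smooth derivatives, hence lie in $C_b^\infty(\mathcal{O})$.

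The scheme transfers verbatim to $W^{2,2}$: one checks by the product rule that $f\chi_R$ has second-order weak derivatives in $L^2(\mathcal{O},\gamma_d)$ and that the cut-off error in the $W^{2,2}$-norm involves only quantities controlled by $\|f\|_{W^{2,2}(\mathcal{O},\gamma_d)}$ and bounded derivatives of $\chi_R$; the dilation–mollification step preserves the convergence of second derivatives on the bounded support. The main obstacle I expect is Step 2: justifying that the dilation–mollification procedure really produces $C^\infty(\bar{\mathcal{O}})$ functions converging to $f$ in $W^{1,2}(\mathcal{O},dx)$ when $\mathcal{O}$ is convex but possibly unbounded, and controlling the order of the limits in $\lambda$ and $\varepsilon$. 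All other pieces reduce to dominated convergence together with the fact that $\gamma_d$ is comparable to Lebesgue measure on bounded sets.
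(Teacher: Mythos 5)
Your overall strategy coincides with the paper's: the inclusion of $W^{k,2}(\Oo,\gamma_d)$ into the locally-defined space via the local equivalence of $\gamma_d$ and Lebesgue measure, and the converse via a cut-off reducing to bounded support. The paper in fact stops after the cut-off, asserting that ``clearly'' $\zeta_j f\in W^{2,2}(\Oo,\gamma_d)$; your Step~2 is an attempt to prove exactly that assertion, which is the one genuinely nontrivial point (it is where the convexity of $\Oo$ enters, since $W^{k,2}(\Oo,\gamma_d)$ is \emph{defined} as a completion of $C^\infty_b(\Oo)$). In structure, then, your write-up is more complete than the paper's own proof. Your Steps~1 and~3, and the transfer to $W^{2,2}$, are fine.

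The defect is that Step~2 has the dilation in the wrong direction. With $f_\lambda(y)=f(x_0+\lambda^{-1}(y-x_0))$ and $\lambda\in(0,1)$ the inner map expands, so $f_\lambda$ is defined only on the shrunken copy $x_0+\lambda(\Oo-x_0)$ of $\Oo$; to mollify you must extend it by zero, and that extension is not a Sobolev function on $\Oo$ unless $f$ has zero trace on $\partial\Oo$: it jumps across $\partial\bigl(x_0+\lambda(\Oo-x_0)\bigr)$, which lies \emph{inside} $\Oo$, and the gradients of $f_\lambda*\rho_\varepsilon$ then fail to stay bounded in $L^2(\Oo,dx)$ as $\varepsilon\to0^+$. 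The correct version contracts the argument instead: set $f_\lambda(y)=f(x_0+\lambda(y-x_0))$ with $\lambda\in(0,1)$, so that $f_\lambda$ is defined on the enlarged open set $x_0+\lambda^{-1}(\Oo-x_0)$, which contains $\overline{\Oo}$ because $(1-\lambda)x_0+\lambda z\in\Oo$ for every $z\in\overline{\Oo}$ and every interior point $x_0$. Then, for $\varepsilon$ small compared with the distance from the (bounded, after the cut-off) support to the boundary of the enlarged set, $f_\lambda*\rho_\varepsilon$ is smooth with compact support, hence in $C^\infty_b(\Oo)$, and letting $\varepsilon\to0^+$ and then $\lambda\to1^-$ gives convergence in $W^{k,2}(\Oo\cap B_{R+2},dx)$ and therefore, by the equivalence of the two measures on bounded sets and the common bounded support, in the Gaussian norm. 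With that correction your argument is sound and actually closes the gap the paper leaves open.
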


\begin{proof}
  We give the proof of the second statement. Let $f\in W^{2,2}(\Oo,\gamma_d)$, $\zeta\in C_0^\infty(\R^d)$ , and let $\{f_j\}_{j\in\N}\subset C^2_b(\R^d)$ be a sequence such that $f_{j|\Oo}\rightarrow f$ in $W^{2,2}(\Oo,\gamma_d)$. Then the sequence $\{\zeta f_j\}_{j\in\N}$ converges to $\zeta f \in W^{2,2}(\Oo,dx)$, whence  $f \in W^{2,2}_{loc}(\Oo,dx)$. It is easily seen that the derivatives in the sense of $ W^{2,2}(\Oo,\gamma_d)$ can be taken for the derivatives in the sense of $W^{2,2}_{loc}(\Oo,dx)$.

The  proof of the converse is a rephrasing of Lemma 3.1 of  \cite{PratoLun}. First, we consider a compactly supported function $f\in W^{2,2}_{loc}(\Oo, dx)$  such that $f\in L^2(\Oo,\gamma_d)$ and whose  weak first and second order derivatives  are in $L^2(\Oo,\gamma_d)$. Then, $f\in W^{2,2}(\Oo, dx)$ and, since $\partial \Oo$ is $C^2$, it has an extension belonging to $W^{2,2}(\R^d, dx)$. Such extension is approximated in $W^{2,2}(\R^d, dx)$ by a sequence  of smooth functions $(f_n)$ with compact support. The restrictions of the functions $f_n$ to $\Oo$ approach $f$ in  $W^{2,2}(\Oo, dx)$ and in $W^{2,2}(\Oo, \gamma)$, therefore $f\in W^{2,2}(\Oo, \gamma)$.

\noindent If $f$ has not compact support we consider a smooth cut-off function $\theta$ such that $\theta\equiv 1$ in $B(0, 1)$, $\theta \equiv 0$ outside $B(0,2)$, and we define
$f_n(x) := f(x)\theta(x/n)$. Each $f_n$ belongs to $W^{2,2}(\Oo, \gamma)$, and using the Dominated Convergence Theorem it is easy to see that $f_n$ and its first and second order derivatives converges to $f$ and to its first and second order derivatives, respectively, in $L^{2}(\Oo, \gamma)$. Therefore, $f\in W^{2,2}(\Oo, \gamma)$.
\end{proof}

Let $\widetilde{L}:D(\widetilde{L})\rightarrow L^2(\Oo,\gamma_d)$ be defined by
\[D(\widetilde{L})=\left\{f\in W^{2,2}(\Oo,\gamma_d):\ \Delta f-\langle x,\nabla f\rangle\in L^2(\Oo,\gamma_d)\text{ and }\frac{\partial f}{\partial\nu}=0\right\}\]
where $\nu(x)$ is the exterior normal vector to $\partial\Oo$ at $x$, and
\begin{equation}
\widetilde{L}f(x)=\Delta f(x)-\langle x,\nabla f(x)\rangle\text{ for every }f\in D(\widetilde{L})\text{ and for a.e. }x\in\Oo.
\label{formula_di_L}
\end{equation}

\begin{lem}
\label{LOo_uguale_Ltilde}
  If $f\in D(\widetilde{L})$ then $f\in D(L^\Oo)$, and $L^\Oo f=\widetilde{L}f$. Therefore $L^\Oo$ is a self-adjoint extension of  $\widetilde{L}$.
\end{lem}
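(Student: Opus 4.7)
The plan is to verify directly that any $f \in D(\widetilde{L})$ satisfies the variational characterization \eqref{domain_L} of $D(L^\Oo)$, with $\widetilde{L}f$ playing the role of the $L^2$ datum appearing there. Concretely, I aim to establish
\[\E(f,v) = \int_\Oo \langle \nabla f, \nabla v\rangle\, d\gamma_d = -\int_\Oo \widetilde{L}f\cdot v\, d\gamma_d \qquad \forall v \in W^{1,2}(\Oo,\gamma_d).\]
Once this identity is in hand, \eqref{domain_L} immediately gives $f \in D(L^\Oo)$ with $L^\Oo f = \widetilde{L}f$; the self-adjoint extension assertion is then automatic, since self-adjointness of $L^\Oo$ was already recorded for general $\Omega$ earlier in the excerpt.

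By the previous lemma, $f \in W^{2,2}_{loc}(\Oo, dx)$, so classical integration by parts is available locally. Both sides of the target identity are continuous in $v$ with respect to the $W^{1,2}(\Oo,\gamma_d)$ norm (using $\widetilde{L}f \in L^2(\Oo,\gamma_d)$), so by density of $C_b^\infty(\Oo)$ in $W^{1,2}(\Oo,\gamma_d)$ it suffices to test against $v \in C_b^\infty(\Oo)$. The key algebraic observation is that the Gaussian density $\rho(x) := (2\pi)^{-d/2}e^{-|x|^2/2}$ satisfies $\nabla \rho = -x\rho$, and hence $\rho \nabla v = \nabla(v\rho) + xv\rho$. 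This converts the weighted integral into an unweighted one:
\[\int_\Oo \langle \nabla f, \nabla v\rangle\, d\gamma_d = \int_\Oo \langle \nabla f, \nabla(v\rho)\rangle\, dx + \int_\Oo \langle x, \nabla f\rangle v\, d\gamma_d.\]
Applying classical integration by parts to the first integral on the right produces a boundary contribution $\int_{\partial\Oo} v\rho\, (\partial f/\partial\nu)\, d\sigma$, which vanishes thanks to the Neumann condition built into $D(\widetilde{L})$, and an interior contribution $-\int_\Oo \Delta f\cdot v\rho\, dx$. Combining the two pieces gives exactly the desired identity.

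The main technical obstacle is that $v\rho$ is in general \emph{not} compactly supported in $\overline{\Oo}$ (the convex set $\Oo$ may be unbounded), so the integration by parts must be justified globally on a smooth but possibly unbounded domain. I would handle this by inserting a radial cutoff $\chi_R \in C_c^\infty(\R^d)$ equal to $1$ on the ball of radius $R$ with $|\nabla\chi_R|$ uniformly bounded in $R$, applying integration by parts to the compactly supported integrand $\chi_R v\rho$, and then sending $R \to \infty$. Each term passes to the limit by dominated convergence, because the Gaussian decay of $\rho$ together with $\nabla f, \Delta f \in L^2(\Oo,\gamma_d)$ (both included in the assumption $f \in W^{2,2}(\Oo,\gamma_d)$) and $\langle x,\nabla f\rangle = \Delta f - \widetilde{L}f \in L^2(\Oo,\gamma_d)$ provide the required integrable majorants.
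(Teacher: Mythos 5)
Your proposal is correct and follows essentially the same route as the paper: verify the variational identity $\E(f,v)=-\int_\Oo \widetilde{L}f\,v\,d\gamma_d$ by integration by parts, with the Neumann condition in $D(\widetilde{L})$ killing the boundary term. The paper states this in one line ("by applying the integration by parts formula"), whereas you supply the supporting details (density of smooth test functions, the identity $\rho\nabla v=\nabla(v\rho)+xv\rho$, and the cutoff argument for unbounded $\Oo$), all of which are sound.
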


\begin{proof}
  Let $f\in D(\widetilde{L})$ and $\varphi\in W^{1,2}(\Oo,\gamma_d)$. Using  the integration by parts formula we get
  $$\int_\Oo \Delta f(x)\,\varphi(x)\,\gamma_d(dx)=\int_\Oo (- \prodscal{\nabla f(x),\nabla\varphi(x)} + \langle x,\nabla f(x)\rangle)\gamma_d(dx) $$
  (the boundary integral vanishes, since $\partial f/\partial \nu =0$). Therefore, 
  \[\int_\Oo \left(\Delta f(x)-\langle x,\nabla f(x)\right)\varphi(x)\gamma_d(dx)=\int_\Oo \prodscal{\nabla f(x),\nabla\varphi(x)}\gamma_d(dx).\]
  So we conclude that $f\in D(L^\Oo)$, and $L^\Oo f=\widetilde{L}f$.
\end{proof}

The operator $\widetilde{L}$ is self-adjoint, see \cite{PratoLun}. Since self-adjoint operator have not proper self-adjoint extensions, we get $\widetilde{L}=L^\Oo$.

We put
\begin{equation}
  C^{1}_\nu(\overline{\Oo})=\left\{g\in C^1_b({\overline{\Oo}}): \frac{\partial g}{\partial \nu}(x)=0,\quad x\in\partial\Oo\right\}
  \label{def_C1nu}
\end{equation}
The realization of $\Delta f-\langle \cdot,\nabla f\rangle$ in $C_b(\overline{\Oo})$ is studied in \cite{BerFor}. In particular they proved that for every $f\in C_b(\overline{\Oo})$ there exists a unique bounded solution $u(t,x)$ of problem
\begin{equation}
  \begin{dcases}
  u_t(t,x) - \Delta u(t,x)+ \prodscal{x,\nabla u(t,x)}=0,\ &t>0,\ x\in\Oo,\\
  \frac{\partial u}{\partial\nu}(t,x)=0, &t>0,\ x\in\partial\Oo,\\
  u(0,x)=f(x) &x\in\Oo,
\end{dcases}
\label{prob_Neuman_dim_finita}
\end{equation}
Setting $P_tf = u(t,\cdot)$, then $(P_t)_{t\geq0}$ is a positivity-preserving contraction semigroup in $C_b(\overline{\Oo})$. Moreover
\begin{equation}
P_t:C_b(\overline{\Oo})\rightarrow C^{1}_\nu(\overline{\Oo})\text{ for all }t>0.
\label{T(t)_manda_Cb_in_C1nu}
\end{equation}
By \cite[Proposition 4.1]{BerFor} we get
\begin{equation}
  |\nabla P_t f(x)|\leq e^{-t}P_t|\nabla f(x)|\quad \forall f\in C^{1}_\nu(\overline{\Oo}),\ \forall x\in\overline{\Oo},\ t\geq0
  \label{diseq:P_t_gradiente}
\end{equation}

The following lemma is useful for generalizing the previous estimate.

\begin{lem}
\label{integral_superficie_finito}
  Let $\Oo\subset\R^d$ be a convex therefore
  \[\int_{\partial \Oo}  G_d dH^{d-1}<\infty,\]
  where $G_d(x) = e^{-|x|^2/2}(2\pi)^{-d/2}$ and $H^{d-1}$ is the Hausdorff $(d-1)$-dimensional surface measure.
\end{lem}

\begin{proof}
  Firs we suppose that $\Oo$ is bounded then $\Oo\subset B_r$ where $B_r$ is the the ball centered in the origin with radius $r$. Therefore the projection $\pi_\Oo:\R^d\rightarrow\R^d$ is a $1-$Lipschitz function, $\pi_\Oo(\partial B_r)=\partial\Oo$ and
  \[H^{d-1}(\partial\Oo)=H^{d-1}(\pi_\Oo(\partial B_r))\leq H^{d-1}(\partial B_r)=d\ \omega_d r^{d-1}\]
  where $\omega_d$ is the surface of the $d-$dimensional unit sphere.
  Therefore
  \[p_\gamma(\Oo,\R^d):=\frac{1}{(2\pi)^{d/2}}\int_{\partial\Oo}e^{-|x|^2/2}H^{d-1}(dx)\leq\frac{d\,\omega_d}{(2\pi)^{d/2}}r^{d-1}.\]
  Then the perimeter of a convex, contained in a ball, grows polynomially with the radius of the ball.

  Now let $\Oo$ be an unbounded convex set, then
  \[p_\gamma(\Oo,R^d)=\sum_{n=0}^\infty p_\gamma(\Oo,B_{n+1}\backslash B_n).\]
  We observe that
  \[\begin{split}
    p_\gamma(\Oo,B_{n+1}\backslash B_n)&= \frac{1}{(2\pi)^{d/2}}\int_{\partial\Oo\cap(B_{n+1}\backslash B_n)} e^{-|x|^2/2}H^{d-1}(dx)\\
  &\leq\frac{e^{-n^2/2}}{(2\pi)^{d/2}}\int_{\partial\Oo\cap(B_{n+1}\backslash B_n)} H^{d-1}(dx)\\
  &\leq \frac{e^{-n^2/2}}{(2\pi)^{d/2}}\int_{\partial\Oo\cap B_{n+1}} H^{d-1}(dx),
  \end{split}\]
  moreover
  \[\int_{\partial\Oo\cap B_{n+1}} H^{d-1}(dx)\leq H^{d-1}(\partial B_{n+1})d\,\omega_d (n+1)^{d-1}.\]
  Therefore
  \[p_\gamma(\Oo,R^d)=\sum_{n=0}^\infty p_\gamma(\Oo,B_{n+1}\backslash B_n)\leq \frac{d\,\omega_d}{(2\pi)^{d/2}}\sum_{n=0}^\infty (n+1)^{d-1} e^{-n^2/2}<\infty.\]
\end{proof}

The estimate \eqref{diseq:P_t_gradiente} may be extended to Sobolev functions, as follows.
\begin{lem}
  \label{lem:stima_grad_semigrup_dim_finita}
  \begin{itemize}
    \item[(i)] $T^\Oo(t)=P_t$ on $C_b(\Oo)$.
    \item[(ii)] The space $C^{1}_\nu(\overline{\Oo})$ is dense in $W^{1,2}(\Oo,\gamma_d)$.
    \item[(iii)] For all $f\in W^{1,2}(\Oo,\gamma_d)$ we have
      \[|\nabla T^{\Oo}(t) f(x)|\leq e^{-t}T^{\Oo}(t)|\nabla f(x)|\]
      for a.e. $x\in\Oo$ and $t\geq0$.
  \end{itemize}
\end{lem}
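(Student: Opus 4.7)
The plan is to prove the three parts in order, each building on the previous. The main obstacle is part (1): to couple the $C_b$-setting of the Bertoldi--Fornaro construction \cite{BerFor} with the $L^2$-theory of $L^{\Oo}$, we must verify that $P_t f$ enters the $L^2$-domain $D(L^{\Oo})$ and that its time derivative can be taken in the $L^2$-sense. Given $f \in C_b(\overline{\Oo}) \subset L^2(\Oo,\gamma_d)$ (the inclusion uses that $\gamma_d$ is a probability measure), $P_t f$ solves \eqref{prob_Neuman_dim_finita} classically and, by \eqref{T(t)_manda_Cb_in_C1nu} together with standard parabolic regularity for the Ornstein--Uhlenbeck equation on the smooth convex set $\Oo$, lies in $W^{2,2}(\Oo,\gamma_d)$ with vanishing normal derivative on $\partial\Oo$ for each $t>0$. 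Hence $P_t f \in D(\widetilde L)=D(L^{\Oo})$ by Lemma \ref{LOo_uguale_Ltilde}, and $\partial_t P_t f = \widetilde L P_t f = L^{\Oo} P_t f$ holds in $L^2(\Oo,\gamma_d)$ (the pointwise time derivative is uniformly bounded, so difference quotients converge in $L^2$ by dominated convergence against $\gamma_d$). Since $T^{\Oo}(t)f$ solves the same $L^2$-Cauchy problem and $L^{\Oo}$ generates a contraction semigroup, uniqueness forces $P_t f = T^{\Oo}(t)f$ $\gamma_d$-a.e.

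For part (2), I would combine (1) with the fact, recorded in Section \ref{section:construc_OU}, that the restriction of $T^{\Oo}(t)$ to $W^{1,2}(\Oo,\gamma_d)=D((I-L^{\Oo})^{1/2})$ is an analytic (hence strongly continuous at $t=0$) semigroup. By definition $C_b^\infty(\overline{\Oo})$ is dense in $W^{1,2}(\Oo,\gamma_d)$, so it suffices, given $g\in C_b^\infty(\overline{\Oo})$, to produce a $W^{1,2}$-approximation of $g$ in $C^1_\nu(\overline{\Oo})$. But by part (1) and \eqref{T(t)_manda_Cb_in_C1nu}, $P_t g = T^{\Oo}(t)g \in C^1_\nu(\overline{\Oo})$ for every $t>0$, and $T^{\Oo}(t)g \to g$ in $W^{1,2}(\Oo,\gamma_d)$ as $t\to 0^+$.

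For part (3), I would extend the pointwise bound \eqref{diseq:P_t_gradiente} by density. Fix $f\in W^{1,2}(\Oo,\gamma_d)$ and use part (2) to pick $\{f_n\}\subset C^1_\nu(\overline{\Oo})$ with $f_n\to f$ in $W^{1,2}(\Oo,\gamma_d)$. Part (1) yields $T^{\Oo}(t)f_n = P_t f_n$, so \eqref{diseq:P_t_gradiente} applied to $f_n$ reads $|\nabla T^{\Oo}(t)f_n(x)| \leq e^{-t}T^{\Oo}(t)|\nabla f_n|(x)$ for every $x\in\overline{\Oo}$. Because $T^{\Oo}(t)$ is an $L^2$-contraction and (by the $W^{1,2}$-analyticity above) a bounded operator on $W^{1,2}(\Oo,\gamma_d)$, we have $\nabla T^{\Oo}(t)f_n \to \nabla T^{\Oo}(t)f$ and $T^{\Oo}(t)|\nabla f_n| \to T^{\Oo}(t)|\nabla f|$ in $L^2(\Oo,\gamma_d)$; extracting an a.e.-convergent subsequence from each side delivers the stated inequality $\gamma_d$-almost everywhere.
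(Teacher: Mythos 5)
Your proposal is correct and follows essentially the same route as the paper: identify $P_t f$ as a classical solution of the abstract Cauchy problem for $\widetilde L=L^{\Oo}$ to get (1), use $T^{\Oo}(t)g\in C^1_\nu(\overline{\Oo})$ together with strong continuity of the semigroup on $W^{1,2}(\Oo,\gamma_d)=D((I-L^{\Oo})^{1/2})$ for (2), and pass \eqref{diseq:P_t_gradiente} to the limit along a.e.-convergent subsequences for (3). Your version is in fact slightly more explicit than the paper's (e.g.\ in justifying $P_tf\in D(\widetilde L)$ and the boundedness of $T^{\Oo}(t)$ on $W^{1,2}$), so there is nothing to correct.
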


\begin{proof}
 To prove statement (i)  we introduce the operator
$$\begin{array}{lll}
D(\mathcal A) & = &  \{ u\in C_b(\overline{\Oo}) \cap W^{2,p}(\Oo \cap B(0,R)) \; \forall p>1, \; R>0:
\\
\\
&& \;\; \Delta u -\langle \cdot, \nabla u\rangle\in C_b(\overline{\Oo}) , \;  \frac{\partial u}{\partial\nu} =0\; {\rm at }\,\partial \Oo\}
\end{array}$$
$${\mathcal A}u = \Delta u -\langle \cdot, \nabla u\rangle, \quad u\in D(\mathcal A), $$
which is the weak generator of  $P_t$ in $C_b(\overline{\Oo})$ (\cite[Proposition 3.4]{BerFor}). In particular, $P_tu\in D(\mathcal A)$ and $ {\mathcal A}P_tu = P_t({\mathcal A}u)$ for every $u\in  D(\mathcal A)$ and $t>0$.

As a first step, we show that $P_tu = T^\Oo(t)u$, for every $u\in D(\mathcal A)$.
By \cite[Proposition 3.5]{BerFor}, $D(\mathcal A) \subset C^1_b(\Oo)$. By Lemma \ref{Le:equiv}, this implies that $D(\mathcal A) \subset W^{1,2}(\Oo, \gamma_d)$.
Moreover, for every $u\in (\mathcal A)$ and $\varphi \in W^{1,2}(\Oo, \gamma_d)$ we have
\begin{equation}
\label{utile}
\int_{\Oo} \langle \nabla u, \nabla \varphi \rangle d\gamma_d = -\int_{\Oo} {\mathcal A}u\,\varphi \, d\gamma_d.
\end{equation}
Indeed, the functions $u_n:=u\theta_n$, where $\theta_n$ are the cut-off functions used in Lemma \ref{Le:equiv}, belong to $W^{2,2}(\R^d)$ and have compact support. Therefore,
$$\int_{\Oo} \langle \nabla u_n, \nabla \varphi \rangle d\gamma_d = -\int_{\Oo} {\mathcal A}u_n\,\varphi \, d\gamma_d + \int_{\partial \Oo} \frac{\partial u_n}{\partial \nu} \varphi \,G_d \,dH^{d-1}$$
where $G_d(x) = e^{-|x|^2/2}(2\pi)^{-d/2}$ and $H^{d-1}$ is the Hausdorff $(d-1)$-dimensional surface measure. Since $\nabla u_n(x) = \theta (x/n) \nabla u(x)  + \frac{1}{n}u(x)\nabla \theta(x/n) $, by the Dominated Convergence Theorem the left hand side converges to the left hand side of \eqref{utile} as $n\to \infty$. Similarly, the first integral in the right hand side converges to the right hand side of  \eqref{utile}, since $ {\mathcal A}u_n (x) =  {\mathcal A}u(x)  \theta (x/n) + u (x)(\frac{1}{n^2} \Delta \theta (x/n) - \frac{1}{n}\langle x, \nabla \theta(x/n)\rangle) -\frac{2}{n} \langle  \nabla u, \nabla \theta(x/n)\rangle $.
Since $\partial u_n/\partial \nu (x) = \frac{1}{n}u(x)\langle \nabla \theta(x/n), \nu(x) \rangle$,   the modulus of the boundary integral does not exceed
$$\frac{1}{n}\|u\|_{\infty}\|\nabla \theta\|_{\infty}  \int_{\partial \Oo}  G_d dH^{d-1}$$
(we recall that $ \int_{\partial \Oo}  G_d dH^{d-1}$ is finite, see Lemma \ref{integral_superficie_finito}), and therefore the boundary integral vanishes as $n\to \infty$.

So, \eqref{utile} holds, and it implies that $u\in D(L^\Oo)$ and $L^\Oo u =  {\mathcal A}u$. Still by \cite[Proposition 3.4]{BerFor},  $\|P_h u -u\|_{\infty}/t$ is bounded for $0
 <h<1$, and $\lim_{h\to 0}( (P_hu)(x) - u(x))/h =  {\mathcal A}u(x)$ for every $x\in \Oo$, so that by the Dominated Convergence Theorem we have also $\lim_{h\to 0} (P_hu -u)/h =  {\mathcal A}u$ in $L^2(\Oo, \gamma_d)$, and by the semigroup property,  $\lim_{h\to 0} (P_{t+h}u -P_{t}u)/h =  {\mathcal A}P_{t}u = L^\Oo P_{t}u $ in $L^2(\Oo, \gamma_d)$ for every $t\geq 0$. So, $d/dt (P_{t}u - T^\Oo(t)u) =0$ for every $t\geq 0$. Since $P_0 u = T^\Oo(0)u =u$, it follows that $P_{t}u =T^\Oo(t)u$ for every $t>0$.

Let now $f\in C_b(\overline{\Oo})$, fix any $\lambda >0$ and set $u= R(\lambda, {\mathcal A})f $. Since $L^\Oo $ is an extension of ${\mathcal A}$, we have $u= R(\lambda,L^\Oo)f$, and for every $t>0$
 $$\begin{array}{lll}
 P_t f & = & P_t(\lambda u - {\mathcal A}u) = (\lambda I- {\mathcal A})P_tu =  (\lambda I- L^\Oo ) T^\Oo(t)u = T^\Oo(t)(\lambda u - L^\Oo u)
 \\
 \\
 & = &T^\Oo(t)f, \end{array}$$
namely statement (i) holds. 

Now we prove statement (ii). If $f\in W^{1,2}(\Oo,\gamma_d)$ there exists $\{f_n\}\subset C^1_b(\overline{\Oo})$ such that $f_n \rightarrow f$ in $W^{1,2}(\Oo,\gamma_d)$ and, up to a subsequence, a.e. in $\Oo$; now $T^{\Oo}(t)f_n\rightarrow f_n$ for $t\rightarrow0^+$ in $W^{1,2}(\Oo,\gamma_d)$ and thanks to \eqref{T(t)_manda_Cb_in_C1nu} we have $T^{\Oo}(t)f_n\in C^1_\nu(\overline{\Oo})$. If we set $g_n=T^{\Oo}(\frac{1}{n})f_n$ then $g_n\in C^1_\nu(\overline{\Oo})$ for all $n\in\N$ and $g_n\rightarrow f$ in $W^{1,2}(\Oo)$.

Let us prove the third statement. For $f\in W^{1,2}(\Oo,\gamma_d)$ there exists a sequence $\{g_n\}_{n\in\N}\subset C^1_\nu(\overline{\Oo})$ such that $g_n\rightarrow f$ in $W^{1,2}(\Oo)$. Along a subsequence $\{g_{n_k}\}$ we have
\[\lim_{k\rightarrow\infty}|\nabla T^{\Oo}(t)g_{n_k}(x)|= |\nabla T^{\Oo}(t)f(x)|\quad\text{a.e. in }\Oo\]
and again, up to a subsequence
\[\lim_{k\rightarrow\infty}\left(T^{\Oo}(t)|\nabla g_{n_k}|\right)(x)= \left(T^{\Oo}(t)|\nabla f|\right)(x)\quad\text{a.e. in }\Oo.\]
By applying \eqref{diseq:P_t_gradiente} we get
\[\begin{split}
 |\nabla T^{\Oo}(t)f(x)|&=\lim_{k\rightarrow\infty}|\nabla T^{\Oo}(t)g_{n_k}(x)| \leq \lim_{k\rightarrow\infty}e^{-t} \left(T^{\Oo}(t)|\nabla g_{n_k}|\right)(x)\\
 &=e^{-t} \left(T^{\Oo}(t)|\nabla f|\right)(x)
\end{split}\]
for a.e. $x\in\Oo$.
\end{proof}

\begin{prop}
\label{prop_T(t)(fg)_dim_finita}
For all $f$, $g\in L^2(\Oo,\gamma_d)$ we have
\[\left[T^{\Oo}(t)(fg)\right]^2\leq T^{\Oo}(t)(f^2) T^{\Oo}(t)(g^2)\quad \text{a.e. in }\Oo,\text{ and }t\geq0.\]
\end{prop}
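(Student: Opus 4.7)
The idea is to view $T^\Oo(t)(fg)(x)$, for fixed $t\geq0$ and for a.e.\ $x\in\Oo$, as a symmetric positive semidefinite bilinear form in $(f,g)$, and then invoke the classical Cauchy--Schwarz inequality for such forms. To make sense of the three quantities: since $\gamma_d$ is a probability measure and $f,g\in L^2(\Oo,\gamma_d)$, the products $fg$, $f^2$, $g^2$ lie in $L^1(\Oo,\gamma_d)$; using the $L^1$-contraction extension of $T^\Oo(t)$ recalled at the end of Section \ref{section:construc_OU}, all three of $T^\Oo(t)(fg)$, $T^\Oo(t)(f^2)$, $T^\Oo(t)(g^2)$ are well-defined elements of $L^1(\Oo,\gamma_d)$. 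Positivity-preservation (Lemma \ref{lem:positivity_T(t)}) passes to the $L^1$ extension by truncation (approximating nonnegative $L^1$ functions by $\varphi\wedge n\in L^2$), so $T^\Oo(t)$ sends nonnegative $L^1$ functions to nonnegative $L^1$ functions.

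For every $\lambda\in\R$ the function $(f+\lambda g)^2$ is nonnegative and belongs to $L^1(\Oo,\gamma_d)$, hence by linearity
\[ T^\Oo(t)(f^2)+2\lambda\,T^\Oo(t)(fg)+\lambda^2\,T^\Oo(t)(g^2)=T^\Oo(t)\bigl((f+\lambda g)^2\bigr)\geq0\quad\text{a.e.\ in }\Oo. \]
The null set on which this fails depends a priori on $\lambda$, but letting $\lambda$ range over $\mathbb{Q}$ and unioning the countably many exceptional sets produces a set $N\subset\Oo$ of full measure such that, for every $x\in\Oo\setminus N$, the quadratic polynomial
\[ Q_x(\lambda):=T^\Oo(t)(f^2)(x)+2\lambda\,T^\Oo(t)(fg)(x)+\lambda^2\,T^\Oo(t)(g^2)(x) \]
is nonnegative at every rational $\lambda$, and hence, by continuity in $\lambda$, on all of $\R$.

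A real quadratic in $\lambda$ that is nonnegative on $\R$ has discriminant at most zero (including the degenerate case where the leading coefficient vanishes, which is handled by a direct check: the linear coefficient must then also vanish), and applied to $Q_x$ this gives exactly
\[ [T^\Oo(t)(fg)(x)]^2\leq T^\Oo(t)(f^2)(x)\,T^\Oo(t)(g^2)(x) \]
for a.e.\ $x\in\Oo$. The only mildly delicate point is the countable-dense-set trick used to combine the individual a.e.\ inequalities (one per $\lambda$) into one that holds simultaneously for all $\lambda$; everything else reduces to the algebra of the Cauchy--Schwarz inequality together with the $L^p$-extension and positivity properties of $T^\Oo(t)$ already established. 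Note in particular that this argument never requires identifying $T^\Oo(t)$ with the Bertoldi--Fornaro kernel $P_t$ or passing through $C_b(\overline{\Oo})$, so the proof applies directly to $L^2$ data.
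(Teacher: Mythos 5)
Your proof is correct, but it takes a genuinely different route from the paper's. The paper proves the inequality first for $f,g\in C^1_\nu(\overline{\Oo})$ by a parabolic comparison argument: it sets $v^\varepsilon=\sqrt{(T^{\Oo}(t)(f^2)+\varepsilon)(T^{\Oo}(t)(g^2)+\varepsilon)}$, computes $L^{\Oo}v^\varepsilon$ explicitly, shows $L^{\Oo}v^\varepsilon\leq v^\varepsilon_t$, and invokes the Bertoldi--Fornaro maximum principle to get $T^{\Oo}(t)(\sqrt{(f^2+\varepsilon)(g^2+\varepsilon)})\leq v^\varepsilon$; letting $\varepsilon\to0$ and then approximating general $L^2$ data yields the claim, in the slightly stronger form with $T^{\Oo}(t)(|fg|)$ on the left --- a form that in any case also follows from your version applied to $|f|$ and $|g|$. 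Your argument instead uses only that $T^{\Oo}(t)$ is linear and positivity-preserving (Lemma \ref{lem:positivity_T(t)}) together with its $L^1$-contraction extension from the end of Section \ref{section:construc_OU}: you expand $T^{\Oo}(t)((f+\lambda g)^2)\geq0$, run the discriminant form of Cauchy--Schwarz, and correctly handle both the null-set issue (via rational $\lambda$ and continuity of the quadratic) and the degenerate case of vanishing leading coefficient. What the paper's route buys is a pointwise statement on all of $\overline{\Oo}$ for continuous data, staying within the explicit PDE framework already set up for Lemma \ref{lem:stima_grad_semigrup_dim_finita}; what your route buys is more substantial: it bypasses the identification $T^{\Oo}(t)=P_t$, the maximum principle, and the regularity theory entirely, and since it uses nothing specific to finite dimensions it would prove the infinite-dimensional analogue for $T^{\Omega}(t)$ directly, rendering the subsequent cylindrical-approximation proof of that proposition unnecessary.
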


\begin{proof}
  First we consider $f,g\in C^1_\nu(\overline{\Oo})$. We fix $\varepsilon>0$ and we put
  \[v^{\varepsilon}(t,x)=\sqrt{(T^{\Oo}(t)(f^2)(x)+\varepsilon)\cdot( T^{\Oo}(t)(g^2)(x)+\varepsilon)}.\]
  Notice that $v^{\varepsilon}$ is well defined since $T^{\Oo}(t)$ is positivity-preserving by Lemma \ref{lem:positivity_T(t)}.
  For convenience we set \[\alpha=\alpha(t,x)=T^{\Oo}(t)(f^2)(x),\quad\beta=\beta(t,x)=T^{\Oo}(t)(g^2)(x).\]
  Then $v^{\varepsilon}=\sqrt{(\alpha+\varepsilon)(\beta+\varepsilon)}$, and we remark that
  \[D_i v^{\varepsilon}=D_i(\sqrt{(\alpha+\varepsilon)(\beta+\varepsilon)}) =\frac{1}{2\sqrt{(\alpha+\varepsilon)(\beta+\varepsilon)}} ((\beta+\varepsilon)D_i\alpha+(\alpha+\varepsilon)D_i\beta)\]
  moreover
  \[\begin{split}
    v^{\varepsilon}_t&=\frac{\partial v^{\varepsilon}(t,x)}{\partial t}=\frac{1}{2\sqrt{(\alpha+\varepsilon) (\beta+\varepsilon)}}\left(\frac{\partial \alpha}{\partial t}(\beta+\varepsilon)+ (\alpha+\varepsilon)\frac{\partial \beta}{\partial t}\right)\\
    &=\frac{1}{2\sqrt{(\alpha+\varepsilon)(\beta+\varepsilon)}} ((\beta+\varepsilon) L^{\Oo}\alpha+(\alpha+\varepsilon) L^{\Oo}\beta).
  \end{split}\]
  Then
  \begin{equation*}
   \begin{split}
    L^{\Oo} v^{\varepsilon}&=\sum_{i=1}^d D_{ii}v^\varepsilon-x_i D_iv^\varepsilon=\sum_{i=1}^d D_{i}(D_i v^\varepsilon)-x_i D_iv^\varepsilon\\
      &=\sum_{i=1}^d D_{i}\left(\frac{1}{2\sqrt{(\alpha+\varepsilon)(\beta+\varepsilon)}}((\beta+\varepsilon) D_i\alpha+(\alpha+\varepsilon) D_i\beta)\right)+\\
      &\phantom{=}-x_i\left(\frac{1}{2\sqrt{(\alpha+\varepsilon)(\beta+\varepsilon)}}((\beta+\varepsilon) D_i\alpha+(\alpha+\varepsilon) D_i\beta)\right)\\
      %&=\sum_{i=1}^d -\frac{1}{4}\frac{1}{((\alpha+\varepsilon)(\beta+\varepsilon))^{3/2}}\left((\beta+\varepsilon) D_i\alpha+(\alpha +\varepsilon)D_i\beta\right)^2+\\
      %&\phantom{=} +\frac{1}{2\sqrt{(\alpha+\varepsilon)(\beta+\varepsilon)}}\left((\beta+\varepsilon) D_{ii}\alpha+D_i\alpha\cdot D_i\beta+D_i\alpha\cdot D_i\beta+(\alpha+\varepsilon) D_{ii}\beta\right) +\\
      %&\phantom{=} -x_i\left(\frac{1}{2\sqrt{(\alpha+\varepsilon)(\beta+\varepsilon)}}((\beta+\varepsilon) D_i\alpha+(\alpha+\varepsilon) D_i\beta)\right)\\
      &=\sum_{i=1}^d-\frac{1}{4}\frac{1}{((\alpha+\varepsilon)(\beta+\varepsilon))^{3/2}}\left((\beta+\varepsilon) D_i\alpha+(\alpha +\varepsilon)D_i\beta\right)^2+\\
      &\phantom{=}+\frac{1}{\sqrt{(\alpha+\varepsilon)(\beta+\varepsilon)}}D_i\alpha\cdot D_i\beta+\\
      &\phantom{=}+\frac{1}{2\sqrt{(\alpha+\varepsilon)(\beta+\varepsilon)}}((\beta+\varepsilon) L^{\Oo}\alpha+(\alpha+\varepsilon) L^{\Oo}\beta)\\
      &=v^\varepsilon_t+\sum_{i=1}^d-\frac{1}{4}\frac{1}{((\alpha+\varepsilon)(\beta+\varepsilon))^{3/2}}\left[((\beta+\varepsilon) D_i\alpha)^2+((\alpha+\varepsilon) D_i\beta)^2+\right.\\
      &\phantom{=} \left.+2(\alpha+\varepsilon)(\beta+\varepsilon) D_i\alpha\cdot D_i\beta\right]+\frac{1}{\sqrt{(\alpha+\varepsilon)(\beta+\varepsilon)}}D_i\alpha\cdot D_i\beta\\
      %&=v^\varepsilon_t+\sum_{i=1}^d-\frac{1}{4}\frac{1}{((\alpha+\varepsilon)(\beta+\varepsilon))^{3/2}}\left[((\beta+\varepsilon) D_i\alpha)^2+((\alpha+\varepsilon) D_i\beta)^2\right]\\
      %&\phantom{=} -\frac{1}{4}\frac{1}{((\alpha+\varepsilon)(\beta+\varepsilon))^{3/2}}2(\alpha+\varepsilon)(\beta+\varepsilon)\ D_i\alpha\cdot D_i\beta+\frac{1}{\sqrt{(\alpha+\varepsilon)(\beta+\varepsilon)}}D_i\alpha\cdot D_i\beta\\
      &=v^\varepsilon_t+\sum_{i=1}^d-\frac{1}{4}\frac{1}{((\alpha+\varepsilon)(\beta+\varepsilon))^{3/2}}\left[((\beta+\varepsilon) D_i\alpha)^2+((\alpha+\varepsilon) D_i\beta)^2\right]\\
      &\phantom{=} +\frac{1}{2\sqrt{(\alpha+\varepsilon)(\beta+\varepsilon)}(\alpha+\varepsilon)(\beta+\varepsilon)}(\beta+\varepsilon) D_i\alpha\cdot(\alpha+\varepsilon) D_i\beta\\
      %&=v^\varepsilon_t+\sum_{i=1}^d-\frac{1}{4}\frac{1}{((\alpha+\varepsilon)(\beta+\varepsilon))^{3/2}}\left[((\beta+\varepsilon) D_i\alpha)^2+((\alpha+\varepsilon) D_i\beta)^2-2(\beta+\varepsilon) D_i\alpha\cdot(\alpha+\varepsilon) D_i\beta\right]\\
      &=v^\varepsilon_t+\sum_{i=1}^d-\frac{1}{4}\frac{1}{((\alpha+\varepsilon)(\beta+\varepsilon))^{3/2}}\left((\beta+\varepsilon) D_i\alpha-(\alpha+\varepsilon) D_i\beta\right)^2\\
      &\leq v^\varepsilon_t.
   \end{split}
  \end{equation*}
  Therefore $v^\varepsilon_t\geq L^{\Oo}(v^\varepsilon)$, $v^\varepsilon(0,x)=\sqrt{(f(x)^2+\varepsilon)(g(x)^2+\varepsilon)}$ and $v^\varepsilon$ satisfies the Neumann boundary condition, thus for the maximum principle (\cite[Proposition 2.1]{BerFor}) we have $v^\varepsilon(t,x) \geq T^{\Oo}(t)[\sqrt{(f^2+\varepsilon)(g^2+\varepsilon)}](x)$ for all for all $x\in \overline{\Oo}$, $t\geq0$ and all $\varepsilon>0$, that is
  \[T^{\Oo}(t)(\sqrt{(f(x)^2+\varepsilon)(g(x)^2+\varepsilon)})(x)\leq\sqrt{(T^{\Oo}(t)(f^2)(x)+\varepsilon)\cdot (T^{\Oo}(t)(g^2)(x)+\varepsilon)}.\]
  Taking the $L^2$-limit as $\varepsilon\rightarrow0^+$ we obtain
  \[T^{\Oo}(t)(|fg|)(x)\leq\sqrt{T^{\Oo}(t)(f^2)(x)\cdot T^{\Oo}(t)(g^2)(x)}\]
  for all $x\in \overline{\Oo}$, $t\geq0$ and $f$, $g\in C^1_\nu(\overline{\Oo})$. Now if $f$, $g\in L^2(\Oo,\gamma_d)$ there exist $\{f_n\}$, $\{g_n\}\subset C^1_\nu(\overline{\Oo})$ such that $f_n\rightarrow f$ and $g_n\rightarrow g$ in $L^2(\Oo,\gamma_d)$. For subsequences $\{f_{n_k}\}$, $\{g_{n_k}\}$ we have
  \[T^{\Oo}(t)(|f_{n_k}g_{n_k}|)\rightarrow T^{\Oo}(t)(|fg|)\quad\text{a.e. in }\Oo\]
  and again, up to subsequences we have
  \[T^{\Oo}(t)\left(f_{n_k}^2\right)\rightarrow T^{\Oo}(t)(f^2)\quad\text{a.e. in }\Oo,\]
  \[T^{\Oo}(t)\left(g_{n_k}^2\right)\rightarrow T^{\Oo}(t)(g^2)\quad\text{a.e. in }\Oo.\]
  Therefore
  \[
  \begin{split}
   T^{\Oo}(t)(|fg|)(x)&=\lim_{k\rightarrow\infty}T^{\Oo}(t)\left(\left|f_{n_k}g_{n_k}\right|\right)(x)\\
   &\leq\lim_{k\rightarrow\infty}\sqrt{T^{\Oo}(t)\left(f_{n_k}^2\right)(x)\cdot T^{\Oo}(t)\left(g_{n_k}^2\right)(x)}\\
   &=\sqrt{T^{\Oo}(t)(f^2)(x)\cdot T^{\Oo}(t)(g^2)(x)}
  \end{split}
  \]
  for a.e. $x\in\Oo$.
\end{proof}

\section{The Ornstein-Uhlenbeck operator in infinite dimension}
The following lemma will be used several times, the proof is easy and it is left to the reader.
\begin{lem}
\label{lem:integrale_scontrato_phi_Cb}
  If $u\in L^2(\Omega,\gamma)$ and
  \begin{equation*}
    \int_{\Omega} u\varphi_{|\Omega}\ d\gamma\leq0,\quad\forall\varphi\in\F C_b(X),\ \varphi\geq0,
  \end{equation*}
  then $u\leq0$ a.e. in $\Omega$.
\end{lem}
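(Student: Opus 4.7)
The plan is to show that the positive part $u^+:=\max(u,0)$ vanishes $\gamma$-almost everywhere on $\Omega$. Setting $E:=\{x\in\Omega:u(x)>0\}$, it suffices to establish $\int_\Omega u\,\mathbf{1}_E\,d\gamma\leq 0$, because that integral equals $\int_\Omega u^+\,d\gamma\geq 0$. So my strategy is to approximate $\mathbf{1}_E$, viewed as an element of $L^2(\Omega,\gamma)$, by a sequence of \emph{nonnegative} cylindrical test functions, and then feed them into the hypothesis.

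First I would extend $\mathbf{1}_E$ by $0$ on $X\setminus\Omega$ to obtain $v\in L^2(X,\gamma)$ with $0\leq v\leq 1$ and $v_{|\Omega}=\mathbf{1}_E$. By the standard density of $\F C_b(X)$ in $L^2(X,\gamma)$ for centered Gaussian measures, pick a sequence $\{\psi_n\}\subset\F C_b(X)$ with $\psi_n\to v$ in $L^2(X,\gamma)$. To make these functions nonnegative and bounded by $1$, I would truncate by setting $\varphi_n:=(\psi_n\vee 0)\wedge 1$. If $\psi_n(x)=\eta_n(l_1(x),\ldots,l_{k_n}(x))$ with $\eta_n\in C_b(\R^{k_n})$ and $l_i\in\X$, then $\varphi_n(x)=\bigl((\eta_n\vee 0)\wedge 1\bigr)(l_1(x),\ldots,l_{k_n}(x))$, where $(\eta_n\vee 0)\wedge 1\in C_b(\R^{k_n})$, so $\varphi_n\in\F C_b(X)$ and $\varphi_n\geq 0$. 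Since the map $\xi\mapsto(\xi\vee 0)\wedge 1$ is $1$-Lipschitz on $\R$ and $v$ already takes values in $[0,1]$, the $L^2$-convergence is preserved: $\varphi_n\to v$ in $L^2(X,\gamma)$, and in particular $\varphi_n{}_{|\Omega}\to\mathbf{1}_E$ in $L^2(\Omega,\gamma)$.

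With the approximating sequence in hand, I would simply apply the hypothesis to each $\varphi_n$ and pass to the limit by Cauchy--Schwarz, using $u\in L^2(\Omega,\gamma)$:
\[0\;\geq\;\int_\Omega u\,\varphi_n{}_{|\Omega}\,d\gamma\;\longrightarrow\;\int_\Omega u\,\mathbf{1}_E\,d\gamma\;=\;\int_\Omega u^+\,d\gamma\;\geq\;0.\]
This forces $u^+=0$ $\gamma$-a.e.\ on $\Omega$, i.e.\ $u\leq 0$ a.e. The only mildly delicate step is verifying that truncation preserves cylindricity, which is immediate since $\F C_b(X)$ is defined via a symbol in $C_b(\R^n)$ and $C_b(\R^n)$ is closed under $(\,\cdot\,\vee 0)\wedge 1$; the remaining ingredients (density of $\F C_b(X)$ in $L^2(X,\gamma)$ and continuity of the linear functional $w\mapsto\int_\Omega u\,w\,d\gamma$ on $L^2(\Omega,\gamma)$) are entirely routine, which is presumably why the author leaves the proof to the reader.
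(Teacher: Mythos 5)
Your proof is correct: the paper omits the argument entirely (``the proof is easy and it is left to the reader''), and your construction --- approximating $\mathbf{1}_{\{u>0\}}$ in $L^2$ by nonnegative cylindrical functions via density of $\F C_b(X)$ and a $1$-Lipschitz truncation that preserves cylindricity, then passing to the limit by Cauchy--Schwarz to force $\int_\Omega u^+\,d\gamma\leq 0$ --- is exactly the standard argument the author is alluding to. Note that the truncation step is why the lemma is stated for $\F C_b(X)$ rather than $\F C_b^1(X)$: $(\,\cdot\,\vee 0)\wedge 1$ preserves continuity and boundedness of the symbol but not differentiability, and your proof correctly needs only the former.
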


For the next proof we will use the approximating sequence of cylindrical open convex sets $\{\Omega_n\}_n$ defined in \cite{LunMirPal}: for each $n\in\N$, $\Omega_n=\pi_n^{-1}(\Oo_n)$ (see \cite[Proposition A.5]{LunMirPal}), where $\Oo_n$ is an open smooth convex subset of a finite dimensional subspace $F_n\subset H$, of dimension $j=j(n)$, moreover $F_n\subset F_{n+1}$ for all $n\in\N$. Let $\{h_i\}_{i\in\N}$ be an orthonormal basis of the Cameron-Martin space $H$ such that
\[F_n=\mathrm{span}\{h_1,\ldots,h_{j(n)}\}.\]
The map $\pi_n:X\rightarrow F_n$ is the finite dimensional projection defined by
\[\pi_n(x)=\sum_{i=1}^{j(n)}\widehat{h}_i(x)h_i.\]
Moreover $\Omega_{n+1}\subset\Omega_n$, $\partial\Oo_n$ is smooth, $\Omega\subset\Omega_n$ and we have
\[\overline{\Omega}=\bigcap_{n\in\N}\overline{\Omega}_n,\quad\gamma(\partial\Omega)=\gamma(\partial\Omega_n)=0,\text{ and }\gamma\left(\bigcap_{n\in\N}\Omega_n\backslash\Omega\right)=0.\]

Let $L^{(n)}:D(L^{(n)})\rightarrow L^2(\Omega_n,\gamma)$ be the self-adjoint operator associated to the quadratic form \eqref{forma_quadratica} with $\Omega=\Omega_n$, and $D(L^{(n)})$ is defined by \eqref{domain_L} with $\Omega=\Omega_n$. Let $(T^{(n)}(t))_{t\geq0}$ be the semigroup generated by $L^{(n)}$ in $L^2(\Omega_n,\gamma)$.

We recall the Proposition 3.3 proved in \cite{LunMirPal}:
\begin{prop}
  \label{prop:limite_Tn_T}
  For any $f\in L^2(X,\gamma)$ and any $\lambda\in \C\backslash(-\infty,0]$,
  \[\lim_{n\rightarrow\infty}\left(R(\lambda,L^{(n)})(f_{|\Omega_n})\right)=R(\lambda,L^{\Omega})(f_{|\Omega})\quad\text{ in } W^{1,2}(\Omega,\gamma),\]
  and
  \[\lim_{n\rightarrow\infty}(T^{(n)}(t){u}_{|\Omega_n})_{|\Omega}= T^\Omega(t)u_{|\Omega}\quad\text{ in }W^{1,2}(\Omega,\gamma)\]
  for any $u\in L^2(X,\gamma)$ and $t>0$.
\end{prop}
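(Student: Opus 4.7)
The plan is to establish the resolvent convergence by a weak-compactness-and-identification argument, then deduce the semigroup statement from Trotter-Kato together with the analyticity of the semigroups. By the resolvent identity it suffices to treat $\lambda>0$ real; extending to $\lambda\in\C\setminus(-\infty,0]$ is then standard.

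Fix $\lambda>0$ and set $u_n:=R(\lambda,L^{(n)})(f_{|\Omega_n})\in W^{1,2}(\Omega_n,\gamma)$. The defining variational identity is
\[\lambda\int_{\Omega_n}u_n v\,d\gamma+\int_{\Omega_n}\prodscal{\nabla_H u_n,\nabla_H v}_H d\gamma=\int_{\Omega_n}fv\,d\gamma,\quad v\in W^{1,2}(\Omega_n,\gamma).\]
Testing $v=u_n$ and applying Cauchy-Schwarz yields the uniform estimate
\[\lambda\|u_n\|^2_{L^2(\Omega_n,\gamma)}+\|\nabla_H u_n\|^2_{L^2(\Omega_n,\gamma;H)}\leq\lambda^{-1}\|f\|^2_{L^2(X,\gamma)}.\]
Since $\Omega\subset\Omega_n$, the restrictions $u_n|_\Omega$ are uniformly bounded in $W^{1,2}(\Omega,\gamma)$, and along a subsequence they converge weakly to some $u\in W^{1,2}(\Omega,\gamma)$.

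To identify $u$ I would test the variational identity against $\varphi\in\F C_b^1(X)$, whose restrictions lie in every $W^{1,2}(\Omega_n,\gamma)$ and in $W^{1,2}(\Omega,\gamma)$. The key input $\gamma(\Omega_n\setminus\Omega)\to 0$, together with the uniform bound and the boundedness of $\varphi$ and $\nabla_H\varphi$, forces the integrals of $u_n\varphi$, $\prodscal{\nabla_H u_n,\nabla_H\varphi}_H$ and $f\varphi$ over $\Omega_n\setminus\Omega$ to vanish as $n\to\infty$. Passing to the limit and using density of $\F C_b^1(X)_{|\Omega}$ in $W^{1,2}(\Omega,\gamma)$ identifies $u=R(\lambda,L^\Omega)(f_{|\Omega})$; uniqueness then promotes the subsequential convergence to the full weak convergence. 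To upgrade to strong $W^{1,2}(\Omega,\gamma)$-convergence I would take $v=u_n$ and pass to the limit once more: the right-hand side converges to $\int_\Omega fu\,d\gamma=\lambda\|u\|^2_{L^2(\Omega,\gamma)}+\|\nabla_H u\|^2_{L^2(\Omega,\gamma;H)}$ by the limit equation, while the left-hand side splits as the squared $W^{1,2}(\Omega,\gamma)$-energy of $u_n|_\Omega$ plus nonnegative tails on $\Omega_n\setminus\Omega$. Weak lower semicontinuity then forces both the tails to vanish and the restricted norms to converge, giving strong convergence.

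The semigroup statement follows from strong $L^2$-resolvent convergence via Trotter-Kato, combined with the contour representation $T(t)=\frac{1}{2\pi i}\int_\Gamma e^{t\lambda}R(\lambda,L)d\lambda$ valid for an analytic semigroup; this transfers the $W^{1,2}$-resolvent convergence to $W^{1,2}$-semigroup convergence for each $t>0$. The main obstacle throughout is the domain mismatch between $u_n$ (living on $\Omega_n$) and $u$ (living on $\Omega$), so every limit must be taken through cylindrical test functions defined on all of $X$, and every tail term must be controlled by $\gamma(\Omega_n\setminus\Omega)\to 0$; the compatibility of the cylindrical approximation scheme of \cite{LunMirPal} with the Sobolev structure on $\Omega$ is the crucial analytic input that makes all of this possible.
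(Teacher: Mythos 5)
This proposition is not proved in the paper at all: it is recalled verbatim from \cite{LunMirPal} (Proposition 3.3 there), so there is no internal proof to compare your argument against. Judged on its own, your reconstruction is essentially sound: the uniform energy estimate from testing with $v=u_n$, the weak subsequential limit on $\Omega$, the identification through cylindrical test functions using $\gamma(\Omega_n\setminus\Omega)\to0$, and the upgrade to strong convergence by matching the $\liminf$ from weak lower semicontinuity with the $\limsup$ coming from the energy identity (the tails on $\Omega_n\setminus\Omega$ being nonnegative) is exactly the standard mechanism for this kind of monotone domain approximation, and the density of $\F C^1_b(X)_{|\Omega}$ in $W^{1,2}(\Omega,\gamma)$ is available by the very definition of the Sobolev space.

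Two points deserve more care than you give them. First, the reduction to real $\lambda>0$ ``by the resolvent identity'' is not innocent here, because $R(\lambda,L^{(n)})$ and $R(\lambda,L^{\Omega})$ act on different spaces: iterating the resolvent identity requires you to apply $R(\mu,L^{(n)})$ to functions that are not restrictions of a fixed element of $L^2(X,\gamma)$, so you would need the slightly stronger statement that $R(\mu,L^{(n)})g_n{}_{|\Omega}\to R(\mu,L^{\Omega})g$ whenever $g_n{}_{|\Omega}\to g$ in $L^2(\Omega,\gamma)$ with $\|g_n\|_{L^2(\Omega_n\setminus\Omega,\gamma)}\to0$. It is cleaner to run your variational argument directly for every $\lambda\in\C\setminus(-\infty,0]$, using that $|\lambda a+b|\geq c_\lambda(a+b)$ for $a,b\geq0$ to get the uniform bound. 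Second, Trotter--Kato in its textbook form also presupposes a fixed ambient space, so invoking it is either redundant or requires the varying-spaces version; what actually carries the day is the contour representation you mention, together with the $n$-uniform sectorial estimates (all the $L^{(n)}$ are self-adjoint and dissipative) and the bound $\|R(\lambda,L^{(n)})f_{|\Omega_n}\|_{W^{1,2}(\Omega,\gamma)}\leq C\|f\|_{L^2(X,\gamma)}(|\lambda|^{-1}+|\lambda|^{-1/2})$ coming from the energy identity, which gives an integrable dominating function on the contour. With these repairs the argument is complete.
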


Fix $n\in\N$, let $q\in\N$ with $q\geq j(n)=\dim F_n$  and let $G=\mathrm{span}\{h_1,\ldots,h_q\}$. Let $\pi_G:X\rightarrow G$ be the finite rank projection defined by
\[\pi_G x=\sum_{i=1}^q\widehat{h}_i(x)h_i.\]
We denote by $\gamma_G$ the induced measure $\gamma\circ \pi_G^{-1}$ in $G$. If $G$ is identified with $\R^q$ through  the isomorphism $x\mapsto(\widehat{h}_1(x),\ldots,\widehat{h}_q(x))$ for $x\in G$, then $\gamma_G$ is the standard Gaussian measure in $\R^q$. Setting $d=q-\dim F_n$, let $\Oo:=\Oo_n\times \R^{d}$ so that $\pi_G(\Omega_n)=\Oo$; we remark that $\Oo$ is an open convex subset of $G$. Let $L^G$ be the Ornstein-Uhlenbeck operator defined by \eqref{forma_quadratica} with $\Omega=\Oo$ and $\gamma=\gamma_G$
and let $T^G(t)$ be the associated semigroup.

\begin{lem}
\label{lem:identifico_TG_con_Tn}
If $w\in D(L^G)$ then the map $x\mapsto w(\pi_G(x))$ belongs to $D(L^{(n)})$ and
\[L^{(n)}(w\circ\pi_G)=(L^G w)\circ\pi_G.\]
\end{lem}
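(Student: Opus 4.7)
The plan is to verify the variational definition \eqref{domain_L} of $D(L^{(n)})$ directly, exploiting the decomposition $\gamma = \gamma_G \otimes \nu^\perp$ associated with the finite rank projection $\pi_G$; here $\nu^\perp$ is the law of $x - \pi_G(x)$ under $\gamma$, and the independence of $\pi_G$ and $I - \pi_G$ follows from their orthogonality in $L^2(X,\gamma)$. Since $\Omega_n = \pi_G^{-1}(\Oo)$, each fibre $y + \ker\pi_G$ with $y\in\Oo$ sits entirely inside $\Omega_n$, so Fubini can be applied freely.

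First I would show $w \circ \pi_G \in W^{1,2}(\Omega_n, \gamma)$ with
\[\nabla_H(w \circ \pi_G)(x) = \sum_{i=1}^q (D_i w)(\pi_G(x))\, h_i.\]
If $\{w_k\}$ is a smooth cylindrical sequence on $\Oo$ approximating $w$ in $W^{1,2}(\Oo, \gamma_G)$, each $w_k \circ \pi_G$ belongs to $\F C^1_b(X)_{|\Omega_n}$ and the formula above holds for $w_k$ by direct computation (directional derivatives along $h_i$ with $i > q$ vanish). Fubini combined with $\nu^\perp$ being a probability measure yields convergence of $w_k\circ\pi_G$ and of its Cameron--Martin gradient in $L^2(\Omega_n,\gamma)$ and $L^2(\Omega_n,\gamma;H)$, respectively, so the closability of $\nabla_H$ closes this step.

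Next, for an arbitrary test function $v \in W^{1,2}(\Omega_n, \gamma)$, I would introduce the fibre-average
\[\tilde v(y) := \int_{\ker\pi_G} v(y+z)\,\nu^\perp(dz),\qquad y \in \Oo,\]
and show $\tilde v \in W^{1,2}(\Oo,\gamma_G)$ with
\[\nabla\tilde v(y) = \int_{\ker\pi_G}(\nabla_G v)(y+z)\,\nu^\perp(dz),\]
where $\nabla_G v$ denotes the projection of $\nabla_H v$ onto $G$. This is direct for cylindrical $v$, and extends to general $v$ by density and Jensen. Then Fubini yields
\[\E^{(n)}(w\circ\pi_G,v)= \int_\Oo \prodscal{\nabla w,\nabla\tilde v}\,d\gamma_G,\]
and invoking $w\in D(L^G)$ with test function $\tilde v$, followed by one more application of Fubini, gives
\[\E^{(n)}(w\circ\pi_G,v)= -\int_\Oo (L^G w)\,\tilde v\,d\gamma_G = -\int_{\Omega_n}((L^G w)\circ\pi_G)\,v\,d\gamma,\]
which is precisely the variational identity needed to conclude $w\circ\pi_G\in D(L^{(n)})$ with $L^{(n)}(w\circ\pi_G) = (L^G w)\circ\pi_G$.

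The main obstacle I anticipate is proving the formula for $\nabla\tilde v$ for arbitrary Sobolev $v$: this requires approximating $v$ by cylindrical functions on $\Omega_n$, verifying the identity there where everything reduces to Gaussian integration on a finite-dimensional marginal, and passing to the limit using the $L^2$-contractivity of conditional expectation together with Jensen's inequality to control both $\tilde v$ and $\nabla\tilde v$ in $W^{1,2}(\Oo,\gamma_G)$.
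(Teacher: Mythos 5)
Your proposal is correct and follows essentially the same route as the paper: factor $\gamma=\gamma_G\otimes\gamma_{\widetilde G}$, apply Fubini, and invoke the variational identity defining $L^G$. The only real difference is cosmetic: the paper applies that identity to each slice $g_y(\xi)=\varphi(\xi_1h_1+\dots+\xi_qh_q+y)$ and then integrates in $y$, whereas you average the test function over the fibre first and test once against $\tilde v$; your write-up also makes explicit two facts the paper uses implicitly, namely that $w\circ\pi_G\in W^{1,2}(\Omega_n,\gamma)$ with the expected gradient and that the fibre average lies in $W^{1,2}(\Oo,\gamma_G)$.
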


\begin{proof}
  We show that there exists $f\in L^2(\Omega_n,\gamma)$ such that, given $\varphi\in W^{1,2}(\Omega_n,\gamma)$ we have
\begin{equation}
\int_{\Omega_n}\langle\nabla_Hw(\pi_G(x)),\nabla_H\varphi(x)\rangle_H\gamma(dx)=\int_{\Omega_n}f(x)\varphi(x)\gamma(dx).
\label{eq:def_integr_operatore_OU}
\end{equation}
We remark that
\begin{equation*}
  \begin{split}
    \langle\nabla_Hw(\pi_G(x)),\nabla_H\varphi(x)\rangle_H&=\sum_{i=1}^{q}\frac{\partial w}{\partial \xi_i}(\pi_G(x))\frac{\partial\varphi}{\partial h_i}(x)\\
  \end{split}
\end{equation*}
Moreover $\varphi(x)=\varphi(\pi_G(x)+(I-\pi_G)(x))$, the space $X$ can be split as $X=G\times\widetilde{G}$ where $\widetilde{G}=(I-\pi_G)(X)$ and also $\gamma=\gamma_G\otimes\gamma_{\widetilde{G}}$ where $\gamma_{\widetilde{G}}=\gamma\circ(I-\pi_G)^{-1})$. Let $\xi\in\R^q$ and let
\[g_y(\xi):=\varphi(\xi_1 h_1+\ldots+\xi_qh_q+y),\quad y\in\widetilde{G}\]
then
\begin{equation*}
  \begin{split}
    \int_{\Omega_n}&\langle\nabla_Hw(\pi_G(x)),\nabla_H\varphi(x)\rangle_H\gamma(dx)\\
    &=\int_{\Omega_n}\sum_{i=1}^{q}\frac{\partial w}{\partial \xi_i}(\pi_G(x))\frac{\partial\varphi}{\partial h_i}(\pi_G(x)+(I-\pi_G)(x)) \gamma(dx)\\
    &=\int_{\widetilde{G}}\int_{\Oo}\sum_{i=1}^{q}\frac{\partial w}{\partial \xi_i}(\xi)\frac{\partial g_y}{\partial \xi_i}(\xi) \gamma_G(d\xi)\ \gamma_{\widetilde{G}}(dy)\\
    &=-\int_{\widetilde{G}}\int_{\Oo}L^G w(\xi)g_y(\xi)\gamma_G(d\xi)\ \gamma_{\widetilde{G}}(dy)=-\int_{\Omega_n}(L^G w)(\pi_G(x))\varphi(x)\gamma(dx).
  \end{split}
\end{equation*}
Then $w\circ\pi_G\in D(L^{(n)})$ and $L^{(n)}(w\circ\pi_G)=(L^G w)\circ\pi_G$.
\end{proof}

\begin{lem}
\label{lem:T_G_continuo}
Let $\widetilde{v}\in C^\infty_b(G)$. Then the function defined by
\[g(t)(x):=T^G(t)\widetilde{v}_{|\Oo}(\pi_G(x)),\quad t>0,\ x\in\Omega_n\]
belongs to $C((0,\infty);D(L^{(n)}))$.
\end{lem}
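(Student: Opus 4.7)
The plan is to observe that the composition map $\Phi: w\mapsto w\circ\pi_G$ is an isometric embedding of $L^2(\Oo,\gamma_G)$ into $L^2(\Omega_n,\gamma)$ which, by Lemma \ref{lem:identifico_TG_con_Tn}, intertwines $L^G$ with $L^{(n)}$. The regularity of $g(t)$ will then follow from the analyticity of the semigroup $T^G(t)$ on $L^2(\Oo,\gamma_G)$.

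First I would verify the product structure. Since $q\geq j(n)$ we have $F_n\subset G$, and decomposing $X = G\oplus\widetilde{G}$ with $\widetilde{G}=(I-\pi_G)(X)$, a direct calculation using $\widehat{h}_j(h_i)=\delta_{ij}$ gives $\pi_n(x)=\pi_n(\pi_G(x))$ for every $x\in X$, hence $\Omega_n = \pi_G^{-1}(\Oo) \cong \Oo\times\widetilde{G}$. Combined with the factorization $\gamma = \gamma_G\otimes\gamma_{\widetilde{G}}$, Fubini yields $\|w\circ\pi_G\|_{L^2(\Omega_n,\gamma)} = \|w\|_{L^2(\Oo,\gamma_G)}$ for every $w\in L^2(\Oo,\gamma_G)$, so $\Phi$ is an isometry from $L^2(\Oo,\gamma_G)$ into $L^2(\Omega_n,\gamma)$. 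Applying the same identity to $L^Gw$ and using Lemma \ref{lem:identifico_TG_con_Tn}, one sees that $\Phi$ restricts to an isometry of the graph-norm Hilbert space $D(L^G)$ into $D(L^{(n)})$.

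Since $L^G$ is self-adjoint and dissipative on $L^2(\Oo,\gamma_G)$, the semigroup $T^G(t)$ is analytic, so for every $f\in L^2(\Oo,\gamma_G)$ we have $T^G(t)f\in D(L^G)$ for all $t>0$ and $t\mapsto T^G(t)f$ lies in $C((0,\infty);D(L^G))$ with the graph norm. Taking $f:=\widetilde{v}_{|\Oo}\in L^2(\Oo,\gamma_G)$ (which is in $L^2$ since $\widetilde{v}$ is bounded) and composing with the continuous isometry $\Phi$ yields $g(t)=\Phi(T^G(t)\widetilde{v}_{|\Oo})\in C((0,\infty);D(L^{(n)}))$, which is the claim. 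The only real obstacle is the bookkeeping for the product decomposition $\Omega_n\cong\Oo\times\widetilde{G}$ and the factorization $\gamma=\gamma_G\otimes\gamma_{\widetilde{G}}$; once these are in hand, the statement is a standard transfer of analytic-semigroup regularity through an isometric intertwiner of self-adjoint operators.
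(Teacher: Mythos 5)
Your proof is correct and follows essentially the same route as the paper: you use Lemma \ref{lem:identifico_TG_con_Tn} to identify $L^{(n)}g(t)$ with $(L^GT^G(t)\widetilde{v}_{|\Oo})\circ\pi_G$ and transfer the continuity of $t\mapsto T^G(t)\widetilde{v}_{|\Oo}$ in $D(L^G)$ through the measure-preserving composition with $\pi_G$. The only difference is presentational -- you package the change of variables as an abstract isometric intertwiner $\Phi$, whereas the paper writes out the two integral identities explicitly.
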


\begin{proof}
By Lemma \ref{lem:identifico_TG_con_Tn} it follows that $g(t)\in D(L^{(n)})$ for all $t>0$. Let us prove that $g$ is continuous at $t_0>0$. For $t>0$ we have
\begin{equation*}
  \begin{split}
    \int_{\Omega_n}|g(t)(x)&-g(t_0)(x)|^2\gamma(dx)\\
    &=\int_{\Omega_n}|T^G(t)\widetilde{v}_{|\Oo}(\pi_G(x))- T^G(t_0)\widetilde{v}_{|\Oo}(\pi_G(x))|^2\gamma(dx)\\
    &=\int_\Oo|T^G(t)\widetilde{v}_{|\Oo}(\xi)- T^G(t_0)\widetilde{v}_{|\Oo}(\xi)|^2\gamma_G(d\xi)
  \end{split}
\end{equation*}
that goes to zero as $t\rightarrow t_0$ thanks to the strong continuity of $T^G(t)$ in $L^2(\Oo,\gamma_G)$. Moreover
\begin{equation*}
  \begin{split}
    \int_{\Omega_n}|L^{(n)}g(t)(x)&-L^{(n)}g(t_0)(x)|^2\gamma(dx)\\
    &=\int_{\Omega_n}|L^{(n)}T^G(t)\widetilde{v}_{|\Oo}(\pi_G(x))- L^{(n)}T^G(t_0)\widetilde{v}_{|\Oo}(\pi_G(x))|^2\gamma(dx)\\
    &=\int_\Oo|L^{G}T^G(t)\widetilde{v}_{|\Oo}(\xi)- L^{G}T^G(t_0)\widetilde{v}_{|\Oo}(\xi)|^2\gamma_G(d\xi)
  \end{split}
\end{equation*}
that goes to zero as $t\rightarrow t_0$ since $t\mapsto T^G(t)\widetilde{v}$ belongs to $C((0,\infty),D(L^G))$. Then $g\in C((0,\infty),D(L^{(n)}))$.
\end{proof}

\begin{thm}
For all $f\in W^{1,2}(\Omega,\gamma)$ and all $t\geq0$ we have
\begin{equation}
  |\nabla_H T^{\Omega}(t)f|_H\leq e^{-t}T^{\Omega}(t)|\nabla_H f|_H \text{ a.e. on }\Omega.
  \label{eq:stima_T(t)_gradiente}
\end{equation}
\end{thm}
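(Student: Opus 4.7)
The plan is to reduce \eqref{eq:stima_T(t)_gradiente} to the finite-dimensional gradient estimate of Lemma \ref{lem:stima_grad_semigrup_dim_finita}(3) by combining the cylindrical approximation $\Omega_n = \pi_n^{-1}(\Oo_n)$ of $\Omega$ with the semigroup convergence in Proposition \ref{prop:limite_Tn_T}. I proceed in three stages: establish the estimate on each $\Omega_n$ for cylindrical test functions; pass to the limit $n\to\infty$; extend to all of $W^{1,2}(\Omega,\gamma)$ by density.

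Fix $n\in\N$ and $q\geq j(n)$, and let $G$, $\Oo=\Oo_n\times\R^{q-j(n)}$, $\pi_G$, $\gamma_G$, $L^G$ and $T^G(t)$ be as in Lemma \ref{lem:identifico_TG_con_Tn}. In the first stage I would establish the estimate on $\Omega_n$ for cylindrical test functions. For $w\in D(L^G)$, Lemma \ref{lem:identifico_TG_con_Tn} gives $L^{(n)}(w\circ\pi_G)=(L^G w)\circ\pi_G$; combined with Lemma \ref{lem:T_G_continuo} and the uniqueness of solutions to $\partial_t u = L^{(n)} u$, this yields
\[T^{(n)}(t)(w\circ\pi_G) = (T^G(t) w)\circ\pi_G, \quad t\geq0,\]
which then extends by $L^2$-continuity to $w\in L^2(\Oo,\gamma_G)$. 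For $w\in C^1_\nu(\overline{\Oo})$ one has the pointwise identities $|\nabla_H T^{(n)}(t)(w\circ\pi_G)(x)|_H = |\nabla T^G(t) w(\pi_G(x))|$ and $T^{(n)}(t)|\nabla_H(w\circ\pi_G)|_H(x) = T^G(t)|\nabla w|(\pi_G(x))$, so Lemma \ref{lem:stima_grad_semigrup_dim_finita}(3) applied on $\Oo$ gives
\[|\nabla_H T^{(n)}(t)(w\circ\pi_G)|_H \leq e^{-t}\,T^{(n)}(t)|\nabla_H(w\circ\pi_G)|_H\]
a.e. on $\Omega_n$.

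In the second stage I would pass to the limit $n\to\infty$. Extending $w\in C^1_\nu(\overline{\Oo})$ to a bounded $C^1$ function on $G$, the functions $w\circ\pi_G$ and $|\nabla_H(w\circ\pi_G)|_H$ both belong to $L^2(X,\gamma)$. Applying Proposition \ref{prop:limite_Tn_T} once to each, we obtain along a common subsequence the a.e. convergence on $\Omega$ of $|\nabla_H T^{(n)}(t)(w\circ\pi_G)|_H$ and $T^{(n)}(t)|\nabla_H(w\circ\pi_G)|_H$ to their $\Omega$-counterparts, which gives \eqref{eq:stima_T(t)_gradiente} for every $f = w\circ\pi_G|_\Omega$ with $w\in C^1_\nu(\overline{\Oo})$.

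In the third stage I would extend to arbitrary $f\in W^{1,2}(\Omega,\gamma)$ by density. Any $f\in\F C^1_b(X)|_\Omega$ can be represented (up to a $\gamma$-null set) as $\tilde v\circ\pi_G|_\Omega$ with $G$ large enough and $\tilde v\in C^1_b(G)$, though $\tilde v$ may fail the Neumann condition on $\partial\Oo$; Lemma \ref{lem:stima_grad_semigrup_dim_finita}(2) lets me approximate $\tilde v|_\Oo$ in $W^{1,2}(\Oo,\gamma_G)$ by functions in $C^1_\nu(\overline{\Oo})$, which lifts to an approximation of $f$ in $W^{1,2}(\Omega_n,\gamma)$ and hence, by restriction, in $W^{1,2}(\Omega,\gamma)$. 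Since $\F C^1_b(X)|_\Omega$ is dense in $W^{1,2}(\Omega,\gamma)$ by construction, so is the class $\{w\circ\pi_G|_\Omega : w\in C^1_\nu(\overline{\Oo}),\ q\geq j(n),\ n\in\N\}$; using the $W^{1,2}$-continuity and $L^2$-continuity of $T^\Omega(t)$ and extracting a.e.-convergent subsequences passes the estimate to the limit. The main technical obstacle will be coordinating the three successive approximation arguments—the semigroup identification on $\Omega_n$, the cylindrical limit $n\to\infty$, and the density passage to all of $W^{1,2}(\Omega,\gamma)$—while tracking a.e. convergences through repeated subsequence extractions; once these approximations are set up, the core gradient inequality is already built into Lemma \ref{lem:stima_grad_semigrup_dim_finita}(3).
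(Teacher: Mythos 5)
Your overall strategy --- identify $T^{(n)}(t)$ with the finite-dimensional semigroup $T^G(t)$ on cylindrical functions, apply the gradient estimate of Lemma \ref{lem:stima_grad_semigrup_dim_finita}(3), pass to the limit $n\to\infty$ via Proposition \ref{prop:limite_Tn_T}, and finish by density --- is exactly the paper's. The difference that matters is your restriction to $w\in C^{1}_\nu(\overline{\Oo})$ in the first stage, and it opens a genuine gap in the second. The class $C^{1}_\nu(\overline{\Oo})$ is tied to the single index $n$ (since $\Oo=\Oo_n\times\R^{q-j(n)}$), so stage 1 gives the inequality $|\nabla_H T^{(m)}(t)(w\circ\pi_G)|_H\leq e^{-t}T^{(m)}(t)|\nabla_H(w\circ\pi_G)|_H$ on $\Omega_m$ only for $m=n$. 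But the limit you take in stage 2 through Proposition \ref{prop:limite_Tn_T} requires this inequality, for the \emph{same fixed} $f=w\circ\pi_G$, on $\Omega_m$ for all large $m$; for $m\neq n$ the extension of $w$ has no reason to satisfy the Neumann condition on the boundary of $\Oo_m\times\R^{d_m}$, so stage 1 does not deliver it. As written, ``pass to the limit $n\to\infty$'' is taking the limit of a one-term sequence of inequalities, and stage 3 inherits the problem because it rests on the stage-2 conclusion.

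The repair is to drop the Neumann condition altogether: Lemma \ref{lem:stima_grad_semigrup_dim_finita}(3) is stated for every $f\in W^{1,2}(\Oo,\gamma_d)$, not only for $C^1_\nu(\overline{\Oo})$, so the finite-dimensional estimate applies to $\widetilde{v}_{|\Oo}$ for any $\widetilde{v}\in C^\infty_b(G)$. A cylindrical $f=\widetilde{v}\circ\pi_G\in\F C^\infty_b(X)$ is then simultaneously cylindrical over every $\Omega_m$: enlarge $G$ to $G_m\supseteq F_m$ and rerun your identification $T^{(m)}(t)f_{|\Omega_m}=(T^{G_m}(t)\widetilde{v})\circ\pi_{G_m}$, which needs no boundary condition on $\widetilde{v}$. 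This gives the estimate on every $\Omega_m$, makes the limit legitimate, and is precisely what the paper does; it also collapses your third stage into the single standard density step for $\F C^\infty_b(X)_{|\Omega}$ in $W^{1,2}(\Omega,\gamma)$ with a.e.\ convergence of both sides along subsequences. Your remaining deviation --- extracting a.e.-convergent subsequences on $\Omega$ directly instead of testing against nonnegative $\varphi\in\F C_b(X)$ and invoking Lemma \ref{lem:integrale_scontrato_phi_Cb} as the paper does --- is harmless.
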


\begin{proof}
First we prove that \eqref{eq:stima_T(t)_gradiente} holds true for the restriction to $\Omega$ of any cylindrical regular function. Let $f\in\F C_b^\infty(X)$ and $\varphi\in\F C_b(X)$ with $\varphi\geq0$. Then
\[
 \begin{split}
  &f(x)=v(l_1(x),\ldots,l_k(x))\\
  &\varphi(x)=w(l_{k+1}(x),\ldots,l_{k+m}(x))
 \end{split}
\]
with $l_i\in X^*$, $v\in C^\infty_b(\R^k)$ and $w\in C_b(\R^m)$, $w\geq0$. We want to estimate the integral
\[\int_{\Omega_n}\varphi(x)|\nabla_H T^{(n)}(t)f_{|\Omega_n}(x)|_H\gamma(dx).\]
Let $G:=\text{span}\{F_n,R_\gamma(l_1),\ldots,R_\gamma(l_{k+m})\}$. Then $G$ is a subspace of $H$ of dimension $q\leq n+k+m$; setting $d=q-\dim F_n$ let $\Oo:=\Oo_n\times\R^d$. Let $\{\widehat{h}_i\}_{i=1}^q$ be an orthonormal basis of $G$ such that $\{\widehat{h}_1,\ldots,\widehat{h}_j\}$ is an orthonormal basis of $F_n$ if $\dim F_n=j$.

Finally we write:
\[
 \begin{split}
  &f(x)=\widetilde{v}(\pi_G(x))\\
  &\varphi(x)=\widetilde{w}(\pi_G(x))
 \end{split}
\]
where $\widetilde{v}\in C^\infty_b(G)$, $\widetilde{w}\in C_b(G)$.

Now we prove that
\begin{equation}
  (T^{(n)}(t)f_{|\Omega_n})(x)=T^G(t)\widetilde{v}_{|\Oo}(\pi_G(x)):=g(t)(x)\quad t>0,\ x\in \Omega_n.
  \label{eq:identita_Tn_TG}
\end{equation}
 We know that the function $t\mapsto T^G(t)\widetilde{v}_{|\Oo}$ belongs to $C(\left[\left. 0,\infty\right)\right.;L^2(\Oo,\gamma_G))\cap C^1((0,\infty);L^2(\Oo,\gamma_G))\cap C((0,\infty);D(L^G))$ and it is the unique classical solution of
\[\left\{
   \begin{split}
    &u'(t)=L^Gu(t),\quad t>0\\
    &u(0)=\widetilde{v}_{|\Oo}
   \end{split}
\right.
\]
in the space $L^2(\Oo,\gamma_G)$. Then for fixed $t_0\in[0,+\infty)$ we have
\begin{equation*}
  \begin{split}
    \int_{\Omega_n}|g(t)(x)&-g(t_0)(x)|^2\gamma(dx)\\
    &=\int_{\Omega_n}|T^G(t)\widetilde{v}_{|\Oo}(\pi_G(x))- T^G(t_0)\widetilde{v}_{|\Oo}(\pi_G(x))|^2\gamma(dx)\\
    &=\int_\Oo|T^G(t)\widetilde{v}_{|\Oo}(\xi)- T^G(t_0)\widetilde{v}_{|\Oo}(\xi)|^2\gamma_G(d\xi)
  \end{split}
\end{equation*}
that goes to zero for $t\rightarrow t_0$ thanks to the strong continuity of $T^G(t)$ in $L^2(\Oo,\gamma_G)$. Let us prove the differentiability of $g$ at $t_0\in(0,+\infty)$. For any $t$ such that $t+t_0>0$ we have
\begin{equation*}
  \begin{split}
    &\int_{\Omega_n}\left|\frac{g(t+t_0)(x)-g(t_0)(x)}{t}-L^GT^G(t_0)\widetilde{v}_{|\Oo}(\pi_G(x))\right|^2\gamma(dx)\\ &=\int_{\Omega_n}\left|\frac{T^G(t+t_0)\widetilde{v}_{|\Oo}(\pi_G(x))-T^G(t_0)\widetilde{v}_{|\Oo}(\pi_G(x))}{t} -L^GT^G(t_0)\widetilde{v}_{|\Oo}(\pi_G(x))\right|^2\gamma(dx)\\
    &=\int_{\Oo}\left|\frac{T^G(t+t_0)\widetilde{v}_{|\Oo}(\xi)-T^G(t_0)\widetilde{v}_{|\Oo}(\xi)}{t} -L^GT^G(t_0)\widetilde{v}_{|\Oo}(\xi)\right|^2\gamma_G(d\xi)
  \end{split}
\end{equation*}
which tends to zero as $t\rightarrow0$. Then $g'(t_0)=L^G T^G(t_0)\widetilde{v}_{|\Oo}(\pi_G(\cdot))$. Moreover $g'$ is continuous with values in $L^2(\Omega_n,\gamma)$ at any $t_0>0$ since
\begin{equation*}
\begin{split}
    \int_{\Omega_n}|g'(t)(x)&-g'(t_0)(x)|^2\gamma(dx)\\
    &=\int_{\Omega_n}|L^GT^G(t)\widetilde{v}_{|\Oo}(\pi_G(x))- L^GT^G(t_0)\widetilde{v}_{|\Oo}(\pi_G(x))|^2\gamma(dx)\\
    &=\int_\Oo|L^GT^G(t)\widetilde{v}_{|\Oo}(\xi)- L^GT^G(t_0)\widetilde{v}_{|\Oo}(\xi)|^2\gamma_G(d\xi)
  \end{split}
\end{equation*}
that goes to zero as $t\rightarrow t_0$. By Lemma \ref{lem:T_G_continuo}, $g\in C((0,\infty);D(L^{(n)}))$ and
\[g'(t)=L^{G}(T^G(t)\widetilde{v}_{|\Oo})(\pi_G(x))=L^{(n)}(T^G(t)\widetilde{v}_{|\Oo}(\pi_G(x)))=L^{(n)}g(t),\ t>0.\]
Moreover
\[g(0)=T^G(0)\widetilde{v}_{|\Oo}(\pi_G(x))=\widetilde{v}_{|\Oo}(\pi_G(x))=f(x)_{|\Omega_n}.\]
Therefore $g$ is a classical solution to
\[\left\{
\begin{split}
  &g'(t)=L^{(n)}g(t),\quad t>0\\
  &g(0)=f_{|\Omega_n},
\end{split}
\right.
\]
in $L^2(\Omega_n,\gamma)$, so that $g(t)=T^{(n)}(t)f_{|\Omega_n}$ and \eqref{eq:identita_Tn_TG} is proved.

Now using Lemma \ref{lem:stima_grad_semigrup_dim_finita} we have:
\[
\begin{split}
  \int_{\Omega_n}\varphi(x)&|\nabla_H T^{(n)}(t)f_{|\Omega_n}(x)|_H\gamma(dx)\\
  &=\int_{\Omega_n}\widetilde{w}(\pi_G(x))|\nabla_H T^{G}(t)(\widetilde{v}_{|\Oo})(\pi_G(x))|_H\gamma(dx)\\
  &=\int_{\Oo}\widetilde{w}(\xi)|\nabla T^{G}(t)(\widetilde{v}_{|\Oo})(\xi)|\gamma_G(d\xi)\\
  &\leq\int_{\Oo}\widetilde{w}(\xi)e^{-t}T^{G}(t)|\nabla (\widetilde{v}_{|\Oo})(\xi)|\gamma_G(d\xi)\\
  &=\int_{\Omega_n}\widetilde{w}(\pi_G(x))e^{-t}T^{G}(t)|\nabla_H(\widetilde{v}_{|\Oo})(\pi_G(x))|_H\gamma(dx)\\
  &=\int_{\Omega_n}\varphi(x) e^{-t}T^{(n)}(t)|\nabla_Hf_{|\Omega_n}(x)|_H\gamma(dx).
\end{split}
\]

Therefore for every $f\in\F C^{\infty}_b(X)$ and $\varphi\in\F C_b(X)$, $\varphi\geq0$ and every $n$ we have
\[\int_{\Omega_n}\varphi(x)|\nabla_H T^{(n)}(t)f_{|\Omega_n}(x)|_H\gamma(dx)\leq\int_{\Omega_n}\varphi(x) e^{-t} T^{(n)}(t)|\nabla_Hf_{|\Omega_n}(x)|_H\gamma(dx).\]
Now thanks to Proposition \ref{prop:limite_Tn_T} we can take the limit as $n\rightarrow\infty$ obtaining
\[\int_{\Omega}\varphi(x)|\nabla_H T^{\Omega}(t)f_{|\Omega}(x)|_H\gamma(dx)\leq\int_{\Omega}\varphi(x) e^{-t} T^{\Omega}(t)|\nabla_Hf_{|\Omega}(x)|_H\gamma(dx)\]
and by Lemma \ref{lem:integrale_scontrato_phi_Cb}
\[|\nabla_H T^{\Omega}(t)f_{|\Omega}(x)|_H\leq e^{-t} T^{\Omega}(t)|\nabla_Hf_{|\Omega}(x)|_H\quad\text{for a.e. }x\in\Omega.\]

If $f\in W^{1,2}(\Omega,\gamma)$ then there exists $\{f_n\}\in\F C^{\infty}_b(X)$ such that ${f_n}_{|\Omega}\rightarrow f$ in $W^{1,2}(\Omega,\gamma)$; along a subsequence $\{f_{n_k}\}$ we have
\[\lim_{k\rightarrow\infty}|\nabla_H T^{\Omega}(t)f_{n_k}(x)|_H= |\nabla_H T^{\Omega}(t)f(x)|_H\quad\text{a.e. in }\Omega\]
and again up to a subsequence we have
\[\lim_{k\rightarrow\infty}T^{\Omega}(t)|\nabla_H f_{n_k}(x)|_H= T^{\Omega}(t)|\nabla_H f(x)|_H\quad\text{a.e. in }\Omega.\]
Therefore
\begin{equation*}
  \begin{split}
    |\nabla_H T^{\Omega}(t)f(x)|_H&=\lim_{k\rightarrow\infty}|\nabla_H T^{\Omega}(t)f_{n_k}(x)|_H \\
    &\leq \lim_{k\rightarrow\infty}e^{-t} T^{\Omega}(t)|\nabla_H f_{n_k}(x)|_H=e^{-t} T^{\Omega}(t)|\nabla_H f(x)|_H
  \end{split}
\end{equation*}
for a.e. $x\in\Omega$.
\end{proof}

\begin{prop}
For all $f$, $g\in L^2(\Omega,\gamma)$ we have
\[\left[T^{\Omega}(t)(fg)\right]^2\leq T^{\Omega}(t)(f^2) T^{\Omega}(t)(g^2)\quad \text{a.e. in }\Omega\]
\end{prop}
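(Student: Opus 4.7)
The plan is to mirror the structure used in the preceding theorem on the gradient estimate: first establish the inequality for cylindrical functions through the finite-dimensional Proposition \ref{prop_T(t)(fg)_dim_finita}, then pass to the limit via Proposition \ref{prop:limite_Tn_T}, and finally extend to all of $L^2(\Omega,\gamma)$ by density.

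First, I would fix $f,g\in\F C_b^\infty(X)$. By enlarging the subspace if necessary, I may assume $f=\widetilde v\circ\pi_G$ and $g=\widetilde w\circ\pi_G$ with the same finite-dimensional $G\subset H$ containing $F_n$, so that $fg=(\widetilde v\widetilde w)\circ\pi_G$ is also cylindrical with respect to $\pi_G$. Setting $\Oo:=\Oo_n\times\R^d$ as in the previous theorem, the identity \eqref{eq:identita_Tn_TG} gives
\[T^{(n)}(t)(fg)_{|\Omega_n}(x)=T^G(t)((\widetilde v\widetilde w)_{|\Oo})(\pi_G(x)),\]
and analogously for $f^2$ and $g^2$. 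Applying the finite-dimensional Proposition \ref{prop_T(t)(fg)_dim_finita} to $\widetilde v_{|\Oo}$ and $\widetilde w_{|\Oo}$ yields
\[\bigl[T^{(n)}(t)(fg)_{|\Omega_n}(x)\bigr]^2\leq T^{(n)}(t)(f^2)_{|\Omega_n}(x)\,T^{(n)}(t)(g^2)_{|\Omega_n}(x)\]
for a.e.\ $x\in\Omega_n$.

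Next, I would pass to the limit $n\to\infty$. By Proposition \ref{prop:limite_Tn_T} we have $T^{(n)}(t)u_{|\Omega_n}\to T^\Omega(t)u_{|\Omega}$ in $W^{1,2}(\Omega,\gamma)$ for $u\in L^2(X,\gamma)$; along a suitable subsequence this convergence holds a.e.\ in $\Omega$, applied simultaneously to $u=fg$, $u=f^2$, and $u=g^2$ (each of which is in $L^\infty\subset L^2(X,\gamma)$ since $f,g$ are bounded cylindrical functions). Taking the limit in the displayed inequality, we obtain
\[\bigl[T^\Omega(t)(fg)(x)\bigr]^2\leq T^\Omega(t)(f^2)(x)\,T^\Omega(t)(g^2)(x)\quad\text{a.e.\ in }\Omega\]
for every cylindrical $f,g\in\F C_b^\infty(X)$.

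Finally, for general $f,g\in L^2(\Omega,\gamma)$, I would choose bounded cylindrical sequences $\{f_m\},\{g_m\}\subset\F C_b^\infty(X)$ with $f_m\to f$ and $g_m\to g$ in $L^2(\Omega,\gamma)$; then $f_m^2\to f^2$ and $f_m g_m\to fg$ in $L^1(\Omega,\gamma)$. Using the extension of $T^\Omega(t)$ to a contraction on $L^1$ obtained at the end of Section \ref{section:construc_OU}, we have $L^1$ convergence of $T^\Omega(t)(f_m g_m)$, $T^\Omega(t)(f_m^2)$ and $T^\Omega(t)(g_m^2)$ to the corresponding quantities with $f,g$, hence a.e.\ convergence along a further subsequence. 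Passing to the limit in the inequality valid for the cylindrical approximations concludes the argument. The main technical point to be careful about is precisely this last step, since $fg$ need only be in $L^1$, which forces the density argument to be carried out in $L^1$ rather than $L^2$; the three-step extraction of subsequences (for $f_m g_m$, $f_m^2$, $g_m^2$) is the natural workaround.
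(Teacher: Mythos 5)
Your proposal is correct and follows essentially the same route as the paper: reduction to the finite-dimensional Proposition \ref{prop_T(t)(fg)_dim_finita} through the identification \eqref{eq:identita_Tn_TG}, passage to the limit in $n$ via Proposition \ref{prop:limite_Tn_T}, and a final density argument (the paper prefers to integrate against a nonnegative cylindrical test function and then invoke Lemma \ref{lem:integrale_scontrato_phi_Cb}, rather than extracting a.e.\ convergent subsequences directly, but this is immaterial). Your explicit treatment of the last step via the $L^1$-extension of the semigroup, needed because $fg$ is only in $L^1(\Omega,\gamma)$, is in fact more careful than the paper's one-line appeal to density in $L^2(\Omega,\gamma)$.
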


\begin{proof}
As before we consider $f,g,\varphi\in\F C_b(X)$ and using Proposition \ref{prop_T(t)(fg)_dim_finita} we prove that
\begin{equation*}
  \begin{split}
    \int_{\Omega_n}\varphi(x)&[T^{(n)}(t)f(x)g(x)]^2\gamma(dx)\\
    &\leq\int_{\Omega_n}\varphi(x)\sqrt{T^{(n)}(t)(f^2)(x)\cdot T^{(n)}(t)(g^2)(x)}\gamma(dx).
  \end{split}
\end{equation*}
The proof is similar to the previous one. Then taking the limit for $n\rightarrow\infty$ we get
\[\int_{\Omega}\varphi(x)[T^{\Omega}(t)f(x)g(x)]^2\gamma(dx)\leq\int_{\Omega}\varphi(x)\sqrt{T^{\Omega}(t)(f^2)(x)\cdot T^{\Omega}(t)(g^2)(x)}\gamma(dx).\]
Since the restrictions to $\Omega$ of the functions of $\F C_b(X)$ are dense in $L^2(\Omega,\gamma)$ we obtain our claim.
\end{proof}

Now we prove the Poincaré inequality. We set
\[m_{\Omega}(\varphi)=\frac{1}{\gamma(\Omega)}\int_\Omega\varphi\ d\gamma,\quad \varphi\in L^1(\Omega,\gamma).\]
We recall that if $\Oo\subset\R^N$ is an open convex set then the Poincaré inequality holds (see Proposition 4.2 in \cite{PratoLun}), that is
\begin{equation}
  \int_\Oo|\psi-m_\Oo(\psi)|^2d\gamma_N\leq\int_\Oo|\nabla\psi|^2d\gamma_N,\quad\forall\psi\in W^{1,2}(\Oo,\gamma_N).
  \label{eq:poincare_ineq_dim_finita}
\end{equation}

\begin{prop}
  For each $\varphi\in W^{1,2}(\Omega,\gamma)$ we have
  \begin{equation}
    \label{eq:poicare_ineq_dim_infinita}
    \int_\Omega|\varphi-m_\Omega(\varphi)|^2d\gamma\leq\int_\Omega|\nabla_H\varphi|_H^2d\gamma
  \end{equation}
\end{prop}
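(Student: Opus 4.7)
The plan is to reduce \eqref{eq:poicare_ineq_dim_infinita} to the finite-dimensional Poincar\'e inequality \eqref{eq:poincare_ineq_dim_finita} by means of the cylindrical approximation $\{\Omega_n\}$ used throughout the previous section, in complete analogy with the proofs of the two preceding propositions, and then to pass to the limit as $n\to\infty$.

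First, I would prove the inequality for a single cylindrical function $\varphi\in\F C_b^\infty(X)$, say $\varphi(x)=v(l_1(x),\ldots,l_k(x))$ with $v\in C_b^\infty(\R^k)$ and $l_1,\ldots,l_k\in\X$. Fixing $n\in\N$ and setting $G:=\mathrm{span}\{F_n,R_\gamma l_1,\ldots,R_\gamma l_k\}\subset H$ of dimension $q\le j(n)+k$, the set $\Omega_n=\pi_n^{-1}(\Oo_n)$, being cylindrical over $F_n\subset G$, can be written as $\Omega_n=\pi_G^{-1}(\Oo)$ with $\Oo:=\Oo_n\times\R^{q-j(n)}$ an open convex subset of $G\cong\R^q$, and $\varphi=\widetilde v\circ\pi_G$ for some $\widetilde v\in C_b^\infty(G)$. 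Using the product decomposition $\gamma=\gamma_G\otimes\gamma_{G^\perp}$ and Fubini,
\[\int_{\Omega_n}|\varphi-m_{\Omega_n}(\varphi)|^2\,d\gamma=\int_{\Oo}|\widetilde v-m_{\Oo}(\widetilde v)|^2\,d\gamma_G,\qquad \int_{\Omega_n}|\nabla_H\varphi|_H^2\,d\gamma=\int_{\Oo}|\nabla\widetilde v|^2\,d\gamma_G.\]
Since $\Oo$ is open convex in $G$, applying \eqref{eq:poincare_ineq_dim_finita} to $\widetilde v$ yields
\[\int_{\Omega_n}|\varphi-m_{\Omega_n}(\varphi)|^2\,d\gamma\le\int_{\Omega_n}|\nabla_H\varphi|_H^2\,d\gamma.\]

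Next, I would let $n\to\infty$. Because $\Omega\subset\Omega_n$, $\gamma(\Omega_n\setminus\Omega)\to 0$, and $\varphi$, $|\nabla_H\varphi|_H$ are bounded (being cylindrical and $C_b^\infty$), dominated convergence gives $\gamma(\Omega_n)\to\gamma(\Omega)$ and $\int_{\Omega_n}\varphi\,d\gamma\to\int_\Omega\varphi\,d\gamma$, whence $m_{\Omega_n}(\varphi)\to m_\Omega(\varphi)$; similarly the two integrals above converge to their analogues over $\Omega$. This proves \eqref{eq:poicare_ineq_dim_infinita} for every cylindrical $\varphi\in\F C_b^\infty(X)_{|\Omega}$. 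A standard density argument then extends the inequality to every $\varphi\in W^{1,2}(\Omega,\gamma)$: approximate $\varphi$ in $W^{1,2}(\Omega,\gamma)$ by a sequence $\{\varphi_k\}\subset\F C_b^\infty(X)_{|\Omega}$, observe that $m_\Omega(\varphi_k)\to m_\Omega(\varphi)$ since $\gamma|_\Omega$ is finite so $L^2$-convergence implies $L^1$-convergence, and pass to the limit in the inequality already established for each $\varphi_k$.

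The principal difficulty is the passage from $\Omega_n$ to $\Omega$: one must justify simultaneously the convergence of $m_{\Omega_n}(\varphi)$ to $m_\Omega(\varphi)$ and of both sides of the inequality, for which the uniform $L^\infty$ bounds on $\varphi$ and $|\nabla_H\varphi|_H$ available in the cylindrical smooth case are essential. Once this is settled the density extension is routine, and the key structural input is really just the product decomposition $\gamma=\gamma_G\otimes\gamma_{G^\perp}$ combined with the fact that $\Omega_n$ is cylindrical over $F_n\subset G$, which turns the infinite-dimensional Poincar\'e inequality for cylindrical functions into a finite-dimensional one on $\Oo$.
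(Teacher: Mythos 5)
Your proposal is correct and follows essentially the same route as the paper: reduce to the finite-dimensional Poincar\'e inequality on $\Oo=\Oo_n\times\R^d$ via the cylindrical sets $\Omega_n=\pi_G^{-1}(\Oo)$ and the factorization $\gamma=\gamma_G\otimes\gamma_{\widetilde G}$, let $n\to\infty$ using $m_{\Omega_n}(\varphi)\to m_\Omega(\varphi)$, and conclude by density of $\F C^1_b(X)_{|\Omega}$ in $W^{1,2}(\Omega,\gamma)$. The only difference is that you spell out the dominated-convergence justification of the limit $n\to\infty$, which the paper leaves implicit.
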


\begin{proof}
First we prove that \eqref{eq:poicare_ineq_dim_infinita} holds for every cylindrical function. Let $f\in\F C^1_b(X)$,
\[f(x)=v(l_1(x),\ldots,l_k(x))\]
 with $v\in C^1_b(\R^k)$ and $l_1,\ldots,l_k\in X^*$. Let $G:=\text{span}\{F_n,R_\gamma(l_1),\ldots,R_\gamma(l_{k})\}$. Then $G$ is a subspace of $H$ of dimension $q\leq n+k$; setting $d=q-\dim F_n$ let $\Oo:=\Oo_n\times\R^d$. Let $\{\widehat{h}_i\}_{i=1}^q$ be an orthonormal basis of $G$ such that $\{\widehat{h}_1,\ldots,\widehat{h}_j\}$ is an orthonormal basis of $F_n$ if $\dim F_n=j$. Then we have
\[
  \begin{split}
    &f(x)=\varphi(\pi_G(x)),\quad\varphi\in C^1_b(G),\\
    &m_{\Omega_n}(f)=\frac{1}{\gamma(\Omega_n)}\int_{\Omega_n}f(x)\ \gamma(dx)=\frac{1}{\gamma_{G}(\Oo)}\int_{\Oo}\varphi(\xi)\ \gamma_G(d\xi)=:m_{\Oo,\gamma_G}(\varphi),\\
    &\lim_{n\rightarrow\infty}m_{\Omega_n}(f)=m_{\Omega}(f).
  \end{split}
\]
Applying \eqref{eq:poincare_ineq_dim_finita} we obtain
\[
 \begin{split}
   \int_{\Omega_n}|f(x)&-m_{\Omega_n}(f)|^2\gamma(dx)\\
   &=\int_{\Omega_n}|f(x)-m_{\Oo,\gamma_G}(\varphi)|^2\gamma(dx) =\int_{\Oo}|\varphi(\xi)-m_{\Oo,\gamma_G}(\varphi)|^2\gamma_G(d\xi)\\
   &\leq\int_{\Oo}|\nabla\varphi(\xi)|^2\gamma_G(d\xi)=\int_{\Omega_n}|\nabla_H f(x)|_H^2\gamma(dx).
 \end{split}
\]

Taking the limit as $n\rightarrow\infty$ we get
\[\int_{\Omega}|f(x)-m_\Omega(f)|^2\gamma(dx)\leq\int_{\Omega}|\nabla_H f(x)|_H^2\gamma(dx).\]
Let now $f\in W^{1,2}(\Omega,\gamma)$. There exists a sequence $\{f_n\}\subset\F C^1_b(X)$ such that ${f_n}_{|\Omega}\rightarrow f$ in $W^{1,2}(\Omega,\gamma)$. It follows that
\begin{equation*}
  \begin{split}
    \int_\Omega|f-m_{\Omega}(f)|^2d\gamma&=\lim_{n\rightarrow\infty}\int_\Omega|f_n-m_{\Omega}(f)|^2d\gamma\\
    &\leq\lim_{n\rightarrow\infty}\int_\Omega|\nabla_H f_n|_H^2d\gamma=\int_\Omega|\nabla_H f|_H^2d\gamma.
  \end{split}
\end{equation*}
\end{proof}

The Poincaré inequality \eqref{eq:poicare_ineq_dim_infinita} implies that if $\nabla_H\varphi$ vanishes in $\Omega$, then $\varphi$ is equal to a constant a.e. in $\Omega$. By the definition of $L^\Omega$, this implies that the kernel of $L^\Omega$ consists of constant functions. As a consequence of the Poincaré inequality other spectral properties of $L^\Omega$ follow.

\begin{prop}
For all $\varphi\in L^2(\Omega,\gamma)$ we have
\begin{equation}
  \|T^{\Omega}(t)\varphi-m_{\Omega}(\varphi)\|_{L^2(\Omega,\gamma)}\leq e^{-t}\|\varphi\|_{L^2(\Omega,\gamma)},\quad t>0,
  \label{eq:decadimento_espon_T(t)}
\end{equation}
and consequently
\begin{equation}
  \sigma(L^\Omega)\backslash\{0\}\subset\{\lambda\in\C: \Re(\lambda)\leq-1\}.
  \label{spectral_gap}
\end{equation}
\end{prop}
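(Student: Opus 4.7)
The plan is to derive \eqref{eq:decadimento_espon_T(t)} from the Poincar\'e inequality \eqref{eq:poicare_ineq_dim_infinita} through an elementary exponential-decay ODE argument, first for $\varphi \in D(L^\Omega)$ and then by density, and to read off \eqref{spectral_gap} from self-adjointness of $L^\Omega$ together with the observation that $\Ker L^\Omega$ consists of constants. The key preliminary fact is that the invariance of $\gamma$ proved earlier propagates the mean under the semigroup: for every $\psi \in L^2(\Omega,\gamma)$ one has $m_\Omega(T^\Omega(t)\psi) = m_\Omega(\psi)$, so Poincar\'e becomes directly applicable to $T^\Omega(t)\varphi - m_\Omega(\varphi)$.

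Concretely, for $\varphi \in D(L^\Omega)$ I would set $u(t) := T^\Omega(t)\varphi - m_\Omega(\varphi)$. The constant $m_\Omega(\varphi)$ lies in $D(L^\Omega)$ with $L^\Omega m_\Omega(\varphi) = 0$, so $u(t) \in D(L^\Omega)$ satisfies $u'(t) = L^\Omega u(t)$ and $m_\Omega(u(t)) = 0$ for every $t \geq 0$. Then by \eqref{derivata_di_T(t)}, the definition of $L^\Omega$ via \eqref{forma_quadratica}, and \eqref{eq:poicare_ineq_dim_infinita} applied to $u(t)$,
\[\frac{d}{dt}\|u(t)\|_{L^2(\Omega,\gamma)}^2 = -2\int_\Omega |\nabla_H u(t)|_H^2\, d\gamma \leq -2\|u(t)\|_{L^2(\Omega,\gamma)}^2,\]
and integrating this scalar differential inequality gives
\[\|u(t)\|_{L^2(\Omega,\gamma)} \leq e^{-t}\|\varphi - m_\Omega(\varphi)\|_{L^2(\Omega,\gamma)} \leq e^{-t}\|\varphi\|_{L^2(\Omega,\gamma)},\]
the last step because $m_\Omega(\varphi)$ is the orthogonal projection of $\varphi$ onto the constants. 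Density of $D(L^\Omega)$ in $L^2(\Omega,\gamma)$, combined with $L^2$-continuity of $T^\Omega(t)$ and of the linear functional $m_\Omega$, then extends the estimate to arbitrary $\varphi \in L^2(\Omega,\gamma)$.

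For \eqref{spectral_gap}, I would let $P\varphi := m_\Omega(\varphi)$ denote the orthogonal projection onto the one-dimensional subspace $\Ker L^\Omega$. Self-adjointness of $L^\Omega$ makes $(I-P)L^2(\Omega,\gamma)$ an $L^\Omega$-invariant reducing subspace, so $P$ commutes with $L^\Omega$ and with each $T^\Omega(t)$. The decay estimate just proved reads $\|T^\Omega(t)(I-P)\|_{L^2\to L^2} \leq e^{-t}$ for all $t \geq 0$, which by the spectral theorem applied to the self-adjoint restriction $L^\Omega|_{(I-P)L^2}$ forces $\sigma(L^\Omega|_{(I-P)L^2}) \subset (-\infty,-1]$. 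Combined with $\sigma(L^\Omega|_{PL^2}) = \{0\}$, this yields \eqref{spectral_gap}. The only mildly delicate point is this last translation from semigroup decay to the spectral bound on the restricted operator, which is where self-adjointness is essential; the rest of the argument is routine once the Poincar\'e inequality and the invariance of $\gamma$ are in hand.
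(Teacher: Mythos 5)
Your proposal is correct and follows essentially the same route as the paper: Poincar\'e plus invariance of $\gamma$ give the differential inequality $\frac{d}{dt}\|u(t)\|^2 \leq -2\|u(t)\|^2$, hence the decay estimate, and the spectral gap then follows from splitting $L^2(\Omega,\gamma)$ as $\Ker(L^\Omega)\oplus\Ker(L^\Omega)^\perp$. The only cosmetic difference is that the paper first proves the decay for arbitrary zero-mean $\varphi\in L^2(\Omega,\gamma)$ (using that $T^\Omega(t)\varphi\in D(L^\Omega)$ for $t>0$ by analyticity) and then applies it to $\varphi-m_\Omega(\varphi)$, whereas you start from $\varphi\in D(L^\Omega)$ and close with a density argument.
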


\begin{proof}
Let $f\in D(L^{\Omega})$ be such that $m_{\Omega}(f)=0$. By using \eqref{eq:poicare_ineq_dim_infinita} we get
\[\int_\Omega L^{\Omega}f\cdot f d\gamma=-\int_\Omega\langle\nabla_Hf,\nabla_Hf\rangle_Hd\gamma=-\int_\Omega|\nabla_Hf|_H^2d\gamma \leq-\|f\|^2_{L^2(\Omega,\gamma)}.\]
For every $\varphi\in L^2(\Omega,\gamma)$, $m_{\Omega}(\varphi)=0$, $T^{\Omega}(t)\varphi\in D(L^{\Omega})$ for $t>0$. Therefore
\[\frac{d}{dt}\|T^{\Omega}(t)\varphi\|^2_{L^2(\Omega,\gamma)}=2\int_\Omega L^{\Omega}T^{\Omega}(t)\varphi\ T^{\Omega}(t)\varphi\ d\gamma\leq-2\|T^{\Omega}(t)\varphi\|^2_{L^2(\Omega,\gamma)},\quad t>0.\]
It follows that
\begin{equation}
 \label{eq:decadimento_espon_T(t)_per_medie_nulle}
 \|T^{\Omega}(t)\varphi\|^2_{L^2(\Omega,\gamma)}\leq e^{-2t}\|\varphi\|^2_{L^2(\Omega,\gamma)},\quad t>0.
\end{equation}
Let now $\varphi\in L^2(\Omega,\gamma)$. Then
\[
\begin{split}
\int_\Omega|T^{\Omega}(t)\varphi-m_\Omega(\varphi)|^2d\gamma&=\int_{\Omega}|T^{\Omega}(t)(\varphi-m_\Omega(\varphi))|^2d\gamma\leq e^{-2t}\int_{\Omega}|\varphi-m_\Omega(\varphi)|^2\\
&=e^{-2t}\left(\int_{\Omega}|\varphi|^2d\gamma-\gamma(\Omega)[m_\Omega(\varphi)]^2\right)\leq e^{-2t}\int_{\Omega}|\varphi|^2d\gamma.
\end{split}
\]
Note that $\varphi\mapsto m_\Omega(\varphi)$ is the orthogonal projection on $\Ker(L^{\Omega})$. Splitting $L^2(\Omega,\gamma)=\Ker(L^{\Omega})\oplus(\Ker(L^{\Omega}))^{\bot}$, $T^{\Omega}(t)$ maps $(\Ker(L^{\Omega}))^{\bot}=\Ker(m_\Omega)$ into itself and the infinitesimal generator of the restriction of $T^{\Omega}(t)$ to $(\Ker(L^{\Omega}))^{\bot}$ is the part $L_0$ of $L^{\Omega}$ in $(\Ker(L^{\Omega}))^{\bot}$. By \eqref{eq:decadimento_espon_T(t)_per_medie_nulle}, the spectrum of $L_0$ is contained in $\{\lambda\in\C:\ \mathrm{Re}(\lambda)\leq-1\}$. Since the spectrum of $L^{\Omega}$ consists of the spectrum of $L_0$ plus the eigenvalue $0$, \eqref{spectral_gap} follows.
\end{proof}

%\begin{proof}
%  Let $\varphi\in\F C^1_b(X)$, $\varphi\geq0$, then
%  \[\varphi(x)=v(l_1(x),\ldots,l_k(x))\]
%  with $l_1,\ldots,l_k\in X^*$ and $v\in C^1_b(\R^k)$ with $v\geq0$. First we prove that $T^{(n)}(t)\varphi\geq0$ a.e. in $\Omega_n$.
%  Let $G:=\text{span}\{F_n,R_\gamma(l_1),\ldots,\R_\gamma(l_{k})\}$. Then $G$ is a subspace of $H$ of dimension $q\leq n+k$; setting $d=q-\dim F_n$ let $\Oo:=\Oo_n\times\R^d$. Let $\{\widehat{h}_i\}_{i=1}^q$ be an orthonormal basis of $G$ such that $\{\widehat{h}_1,\ldots,\widehat{h}_j\}$ is an orthonormal basis of $F_n$ if $\dim F_n=j$. Then we have
%  \[\varphi(x)=\widetilde{v}(\pi_G(x))\]
%  where $\widetilde{v}\in C^1_b(\R^q)$ and $\widetilde{v}\geq0$ in $\R^q$. Therefore, by
%  \[(T^{(n)}(t)\varphi_{|\Omega_n})(x)=T^G(t)\widetilde{v}_{\Oo}(\pi_G(x)),\quad\text{for }t>0,\text{ and }x\in\Omega_n.\]
%  By \textbf{Proposition 2.1} of \cite{BerFor} we have
%  \[(T^{(n)}(t)\varphi_{|\Omega_n})(x)\geq0 \text{ for all }x\in\Omega_n.\]
%  Thanks to \textbf{Proposition \ref{prop:limite_Tn_T}}, up to a subsequence,
%  \[(T^{\Omega}(t)\varphi_{|\Omega})(x)=\lim_{n\rightarrow\infty}(T^{(n)}(t)\varphi_{|\Omega_n})(x)\geq0\text{ a.e. in }\Omega.\]
%\end{proof}

Now we have all the ingredients to prove the logarithmic Sobolev inequality by using the Deuschel-Strook's method. We write down the proof for the reader's convenience.

\begin{prop}
  For all $f\in W^{1,2}(\Omega,\gamma)$ we have
  \begin{equation}
    \label{eq:log_sobolev_ineq}
    \int_\Omega f^2\log(|f|)d\gamma\leq\int_{\Omega}|\nabla_H f|_H^2d\gamma+\|f\|_{L^2(\Omega,\gamma)}^2\log(\|f\|_{L^2(\Omega,\gamma)}).
  \end{equation}
\end{prop}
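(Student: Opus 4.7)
The plan is to follow the Deuschel--Stroock scheme announced in the introduction, combining three tools established above: the pointwise gradient estimate $|\nabla_H T^\Omega(s) u|_H \leq e^{-s} T^\Omega(s)|\nabla_H u|_H$, the semigroup Cauchy--Schwarz inequality $[T^\Omega(s)(uv)]^2 \leq T^\Omega(s)(u^2) T^\Omega(s)(v^2)$, and the invariance of $\gamma$ under $T^\Omega(s)$.

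First I would reduce to an $f$ with $0 < \delta \leq f \leq M < \infty$ a.e.\ in $\Omega$: replacing $f$ by $|f|$ leaves both sides of \eqref{eq:log_sobolev_ineq} unchanged (since $|\nabla_H |u||_H = |\nabla_H u|_H$ a.e., as in the proof of Lemma \ref{lem:positivity_T(t)}), and a standard truncation then allows the uniform bounds, to be removed at the end by dominated convergence. For such an $f$, set
\[g(s) := T^\Omega(s)(f^2), \qquad s \geq 0,\]
which inherits the bounds $\delta^2 \leq g(s) \leq M^2$ by positivity and contractivity. Introduce the entropy functional
\[H(s) := \int_\Omega g(s) \log g(s)\, d\gamma.\]

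The core computation is the differentiation of $H$. For $s > 0$, $g(s) \in D(L^\Omega)$ and $\log g(s) \in W^{1,2}(\Omega,\gamma)$ with $\nabla_H \log g(s) = \nabla_H g(s)/g(s)$ (here the uniform lower bound on $g(s)$ is essential). Differentiating under the integral and using the quadratic form definition of $L^\Omega$ with test function $1 + \log g(s)$ yields
\[H'(s) = \int_\Omega (1 + \log g(s))\, L^\Omega g(s)\, d\gamma = -\int_\Omega \frac{|\nabla_H g(s)|_H^2}{g(s)}\, d\gamma.\]
Next, chaining the gradient estimate for $T^\Omega(s)$ applied to $f^2$ (which gives $|\nabla_H g(s)|_H \leq 2 e^{-s} T^\Omega(s)(f|\nabla_H f|_H)$) with the semigroup Cauchy--Schwarz ($[T^\Omega(s)(f|\nabla_H f|_H)]^2 \leq g(s)\, T^\Omega(s)(|\nabla_H f|_H^2)$) produces the pointwise bound $|\nabla_H g(s)|_H^2/g(s) \leq 4 e^{-2s} T^\Omega(s)(|\nabla_H f|_H^2)$. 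Integrating over $\Omega$ and invoking the invariance of $\gamma$ gives
\[-H'(s) \leq 4 e^{-2s} \int_\Omega |\nabla_H f|_H^2\, d\gamma.\]

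Integrating in $s$ from $0$ to $+\infty$ then yields $H(0) - \lim_{s \to \infty} H(s) \leq 2 \int_\Omega |\nabla_H f|_H^2\, d\gamma$. The initial value is $H(0) = \int_\Omega f^2 \log f^2\, d\gamma$, while the exponential $L^2$-decay \eqref{eq:decadimento_espon_T(t)} of $T^\Omega(s)(f^2)$ toward its mean, combined with the uniform bounds on $g(s)$, identifies the limit as $\|f\|_{L^2(\Omega,\gamma)}^2 \log \|f\|_{L^2(\Omega,\gamma)}^2$. Dividing by $2$ and removing the truncation by dominated convergence delivers \eqref{eq:log_sobolev_ineq}. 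The main obstacle I foresee is the justification of the quadratic-form manipulation with the non-Lipschitz test function $1 + \log g(s)$, together with the differentiation of $H$ under the integral; this is precisely why the preliminary reduction to $f$ bounded away from zero (and from above) is indispensable.
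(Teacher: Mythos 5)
Your proposal is correct and follows essentially the same route as the paper: the Deuschel--Stroock differentiation of the entropy $\int_\Omega T^\Omega(s)(f^2)\log T^\Omega(s)(f^2)\,d\gamma$, the chaining of the gradient commutation estimate with the semigroup Cauchy--Schwarz inequality and the invariance of $\gamma$, and the identification of the long-time limit via the Poincar\'e-induced convergence to the mean, all after reducing to $f$ bounded away from $0$ (the paper works with cylindrical $f\geq c>0$ rather than a two-sided truncation, and removes the restriction by Fatou using $t^2\log t>-1$ instead of dominated convergence, but these are interchangeable details).
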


\begin{proof}
 First we assume that $f\in\F C^1_b(\Omega)$ and $f\geq c>0$ in $\Omega$. Setting $\varphi=f^2$ we have
 \[\nabla_H f=\frac{1}{2}\frac{\nabla_H\varphi}{\sqrt\varphi}\]
 so \eqref{eq:log_sobolev_ineq} is equivalent to
 \begin{equation*}
 \int_\Omega \varphi\log(\varphi)d\gamma-\int_\Omega\varphi d\gamma\log\left(\int_\Omega\varphi d\gamma\right)\leq\frac{1}{2}\int_{\Omega}\frac{1}{\varphi}|\nabla_H \varphi|_H^2d\gamma.
 \end{equation*}
 We remark that
 \[
 \begin{split}
 \frac{d}{dt}\int_\Omega T^{\Omega}(t)(\varphi)\log(T^{\Omega}(t)(\varphi))d\gamma=&\int_\Omega L^{\Omega}T^{\Omega}(t)(\varphi)\log(T^{\Omega}(t)(\varphi))d\gamma\\
 &+\int_\Omega L^{\Omega}T^{\Omega}(t)(\varphi)d\gamma.
 \end{split}
 \]
 The second term vanishes by the invariance of $\gamma$, while for the first we recall that
 \[\int_\Omega (L^{\Omega}\psi)g(\psi) d\gamma=-\int_\Omega g'(\psi)|\nabla_H\psi|_H^2 d\gamma\]
 with $g(\xi)=\log\xi$ and $\psi=T^{\Omega}(t)\varphi$. Hence
 \[\frac{d}{dt}\int_\Omega T^{\Omega}(t)(\varphi)\log(T^{\Omega}(t)(\varphi))d\gamma=-\int_\Omega\frac{1}{T^{\Omega}(t)(\varphi)}|\nabla_H T^{\Omega}(t)(\varphi)|^2 d\gamma.\]
 Since
 \[(|\nabla_H T^{\Omega}(t)(\varphi)|_H)^2\leq \left(e^{-t}T^{\Omega}(t)|\nabla_H (\varphi)|_H\right)^2\]
 and
 \[\left(T^{\Omega}(t)|\nabla_H (\varphi)|_H\right)^2=\left[T^{\Omega}(t)\left(\sqrt\varphi\frac{|\nabla_H\varphi|_H}{\sqrt\varphi}\right)\right]^2\leq T^{\Omega}(t)(\varphi)T^{\Omega}(t)\left(\frac{|\nabla_H\varphi|^2_H}{\varphi}\right),\]
 we have
 \begin{equation}
 \label{eq:disug_per_log_sob}
 \begin{split}
   \frac{d}{dt}\int_\Omega T^{\Omega}(t)(\varphi)\log(T^{\Omega}(t)(\varphi))d\gamma&\geq-e^{-2t}\int_\Omega T^{\Omega}(t)\left(\frac{|\nabla_H\varphi|^2_H}{\varphi}\right)d\gamma\\
   &=-e^{-2t}\int_\Omega \frac{|\nabla_H\varphi|^2_H}{\varphi}d\gamma.
 \end{split}
 \end{equation}

 We recall that, by \eqref{eq:poicare_ineq_dim_infinita},
 \[\lim_{t\rightarrow\infty} T^{\Omega}(t)\varphi=m_\Omega(\varphi),\quad\text{in }L^2(\Omega,\gamma).\]
 Then
 \[\lim_{t\rightarrow\infty}\int_\Omega|\log(T^{\Omega}(t)\varphi)-\log(m_\Omega\varphi)|^2d\gamma=0\]
 indeed by assumption, we have that $\varphi\geq c^2$ and, by Lemma \ref{lem:positivity_T(t)}, follows that $T^{\Omega}(t)\varphi\geq c^2$. Moreover $m_\Omega(\varphi)\geq c^2$, and
 \[
  \begin{split}
   \left|\log(T^{\Omega}(t)\varphi)-\log(m_\Omega(\varphi))\right|&= \left|\int_{T^{\Omega}(t)\varphi}^{m_\Omega(\varphi)}\frac{1}{s}\ ds\right|\\
   &\leq\frac{1}{\min\{T^{\Omega}(t)\varphi,m_\Omega(\varphi)\}}|T^{\Omega}(t)\varphi-m_\Omega(\varphi)|\\
   &\leq\frac{1}{c^2}|T^{\Omega}(t)\varphi-m_\Omega(\varphi)|.
  \end{split}
 \]
 Then
 \[\lim_{t\rightarrow\infty}\int_\Omega T^{\Omega}(t)(\varphi)\log(T^{\Omega}(t)(\varphi))d\gamma=m_\Omega(\varphi)\log\left(m_\Omega(\varphi)\right)\]
 Integrating \eqref{eq:disug_per_log_sob} with respect to $t$ between $0$ and $\infty$ we get
 \[m_\Omega(\varphi)\log\left(m_\Omega(\varphi)\right)-\int_\Omega\varphi\log(\varphi)d\gamma\geq-\frac{1}{2}\int_\Omega \frac{|\nabla_H\varphi|^2_H}{\varphi}d\gamma.\]
 Now let $f\in W^{1,2}(\Omega,\gamma)$ and $f\geq0$ a.e. in $\Omega$. Then there exists a sequence $\{f_n\}\subset\F C^1_b(\Omega)$ such that $f_n\geq1/n$ and $f_n\rightarrow f$ in $W^{1,2}(\Omega,\gamma)$ and almost everywhere. Since $t^2\log(t)>-1$ for all $t>0$, we can apply the Fatou's lemma and obtain
 \[\begin{split}
 \int_\Omega f^2\log(f)d\gamma&\leq\lim_{n\rightarrow\infty}\int_\Omega f_n^2\log(f_n)d\gamma\\
 &\leq\lim_{n\rightarrow\infty} \left[\int_{\Omega}|\nabla_H f_n|_H^2d\gamma+\|f_n\|_{L^2(\Omega,\gamma)}^2\log(\|f_n\|_{L^2(\Omega,\gamma)})\right]\\
 &=\int_{\Omega}|\nabla_H f|_H^2d\gamma+\|f\|_{L^2(\Omega,\gamma)}^2\log(\|f\|_{L^2(\Omega,\gamma)}).
 \end{split}
 \]
 For a general $f\in W^{1,2}(\Omega,\gamma)$, \eqref{eq:log_sobolev_ineq} follows from the fact that $|f|\in W^{1,2}(\Omega,\gamma)$ and $|\nabla_H |f||_H=|\nabla f|_H$ almost everywhere.
\end{proof}

\begin{rmk}
 The Logarithmic-Sobolev inequality allows to prove an interesting property of the space $W^{1,2}(\Omega,\gamma)$, namely if $f\in W^{1,2}(\Omega,\gamma)$ and $v$ is a measurable function with $|v(x)|\leq k(\|x\|_X+1)$ for some $k>0$ and for a.e. $x\in\Omega$, then $fv\in L^2(\Omega,\gamma)$.
 To this aim we recall that, by the Fernique Theorem \cite{Fernique}, there exists a constant $\alpha>0$ such that
 \[\int_X e^{\alpha\|x\|_X^2}d\gamma<\infty.\]
 Fix $c<\alpha/4$. Then we have
 \[\begin{split}
   \int_{\Omega}&(f(x)v(x))^2 d\gamma\\
   &\leq k^2\int_{\Omega}f(x)^2(\|x\|_X+1)^2 d\gamma\leq 2k^2\int_{\Omega}f(x)^2\|x\|_X^2d\gamma+2k^2\int_{\Omega}f(x)^2d\gamma\\
   &=k^2\int_{\{x\in\Omega:\ c\|x\|_X^2>\log|f(x)|\}}f(x)^2\|x\|_X^2 d\gamma\\
   &\phantom{=}+k^2\int_{\{x\in\Omega:\ c\|x\|_X^2\leq\log|f(x)|\}}f(x)^2\|x\|_X^2 d\gamma+2k^2\|f\|_{L^2(\Omega,\gamma)}^2\\
   &\leq k^2\int_X \|x\|_X^2\ e^{2c\|x\|_X^2}d\gamma+\frac{k^2}{c}\int_\Omega|f(x)|^2\log|f(x)|d\gamma+2k^2\|f\|_{L^2(\Omega,\gamma)}^2\\
   &\leq k^2\left(\int_X \|x\|_X^4 d\gamma\right)^{1/2}\left(\int_Xe^{4c\|x\|_X^2}d\gamma\right)^{1/2}\\
   &\phantom{=}+\frac{k^2}{c}\int_\Omega|f(x)|^2\log|f(x)|d\gamma +2k^2\|f\|_{L^2(\Omega,\gamma)}^2\\
   &\leq C+\frac{k^2}{c}\left(\int_{\Omega}|\nabla_H f|_H^2d\gamma+\|f\|_{L^2(\Omega,\gamma)}^2\log(\|f\|_{L^2(\Omega,\gamma)})\right)+2k^2\|f\|_{L^2(\Omega,\gamma)}^2,
 \end{split}
 \]
 that is $fv\in L^2(\Omega,\gamma)$. The above estimate shows that the functional $\Lambda_v:f\mapsto fv$, maps bounded subsets of $W^{1,2}(\Omega,\gamma)$ into bounded subsets of $L^2(\Omega,\gamma)$; this implies that $\Lambda_v$ is continuous.
\end{rmk}

\begin{ack}
 The author would like to thank Prof. Alessandra Lunardi, Prof. Michele Miranda Jr. for many useful discussions and comments. The author is member of
GNAMPA of the Italian Istituto Nazionale di Alta Matematica (INdAM).
\end{ack}

\end{document}